\documentclass[10pt,twoside]{amsart}

\usepackage[T1]{fontenc}
\usepackage[utf8]{inputenc}
\usepackage{stix2}
\usepackage[final]{microtype}
\usepackage[
  paperwidth=171mm,
  paperheight=246mm,
  left=40.4pt,
  right=40.5pt,
  top=48pt,
  bottom=49pt,
  headheight=9pt,
  headsep=10pt,
  footskip=26pt,
  marginparsep=0pt,
  marginparwidth=0pt
]{geometry}

\usepackage{graphicx}
\usepackage{mathtools}
\usepackage{amsthm,mathrsfs}

\numberwithin{equation}{section}
\allowdisplaybreaks

\newtheorem{proposition}{Proposition}[section]
\newtheorem{theorem}[proposition]{Theorem}
\newtheorem{lemma}[proposition]{Lemma}
\newtheorem{corollary}[proposition]{Corollary}

\theoremstyle{definition}
\newtheorem{definition}[proposition]{Definition}
\newtheorem{example}[proposition]{Example}

\theoremstyle{remark}
\newtheorem{remark}[proposition]{Remark}

\DeclareMathOperator{\sing}{Sing}

\newcommand{\C}{\mathbb{C}}
\newcommand{\pn}{\mathbb{P}^n}
\newcommand{\pnt}{\widetilde{\mathbb{P}}^n}

\newcommand{\dps}{\displaystyle}
\newcommand{\ep}{\varepsilon}

\newcommand{\kk}{\mathcal{K}}

\newcommand{\fol}{\mathcal{F}}
\newcommand{\F}{\mathcal{F}}

\newcommand{\G}{\mathcal{G}}

\newcommand{\nn}{\mathcal{N}}
\newcommand{\oo}{\mathcal{O}}

\newcommand{\W}{\mathbf{W}}
\newcommand{\N}{\mathbf{N}}

\newcommand{\R}{\mathbf{R}}
\newcommand{\E}{\mathbf{E}}
\newcommand{\Z}{\mathbf{Z}}

\title[Milnor number of vector fields]{A lower bound for the Milnor number of vector fields}

\author[M. Corr\^ea]{Maur\'icio Corr\^ea}
\address{Maur\'icio Corr\^ea \\
Universit\`a degli Studi di Bari,
Via E. Orabona 4, I-70125, Bari, Italy
}
\email[M. Corr\^ea]{mauricio.barros@uniba.it, mauriciomatufmg@gmail.com}

\author[G. N. Costa]{Gilcione Nonato Costa}
\address{Gilcione Nonato Costa \\
ICEx -- UFMG \\
Departamento de Matem\'atica \\
Av. Ant\^onio Carlos 6627 \\
30123-970 Belo Horizonte MG, Brazil}
\email[G. N. Costa]{gilcione@mat.ufmg.br}

\author[A. S. Russi]{Alejandra Salamanca Russi}
\address{Alejandra Salamanca Russi \\
ICEx -- UFMG \\
Departamento de Matem\'atica \\
Av. Ant\^onio Carlos 6627 \\
30123-970 Belo Horizonte MG, Brazil}
\email[A. S. Russi]{alejandra.russi.s@gmail.com}

\begin{document}

 \begin{abstract}
We study holomorphic vector fields whose singular locus contains a smooth
positive-dimensional local complete-intersection component. We obtain an
exact global formula for its Milnor--Poincar\'e--Hopf contribution and prove
a sharp local lower bound, valid under holomorphic perturbations, in terms
of the embedded contribution associated with the component. As an
application to gradient vector fields, we derive a lower bound for the
total Milnor number of the isolated critical points arising near a smooth
positive-dimensional critical component; for a morsification, this becomes
a lower bound for the number of Morse critical points converging to that
component. Explicit families show that the bounds are sharp and exhibit
the redistribution of singularities between a fixed neighbourhood of the
component and the hyperplane at infinity in projective compactifications.
\end{abstract}
\maketitle
\section{Introduction}

Let $X$ be a germ of a holomorphic vector field on $\C^n$ whose singular locus contains a smooth complete-intersection subvariety $W\subset\C^n$ of codimension $d\geq2$. Under a holomorphic deformation, isolated singularities may separate from $W$, remain in a fixed neighbourhood of it, and converge to $W$ as the parameter tends to zero. After projective compactification, part of the total index may instead be concentrated at singularities on the hyperplane at infinity.

Throughout this article, an embedded singular point associated with $W$ means an isolated singularity produced by a sufficiently small admissible deformation and converging to $W$ as the parameter tends to zero. This perturbative interpretation agrees with the deformation theory of Baum--Bott residues developed by Bracci and Suwa \cite{BracciSuwa}. The limiting count is therefore a residual contribution supported on $W$, not merely an invariant of the reduced set underlying $\sing(\fol)$.

Let $\{X_t\}$ be a holomorphic deformation of $X$, defined for $0<|t|<\varepsilon$, such that $X_t\to X$ as $t\to0$ and $\sing(X_t)$ is finite for every $t\neq0$. We set
\begin{equation}\label{dpnm}
\mu(X_t,\W):=\lim_{t\to0}\sum_{p_i^t\in A_{\W}}\mu(X_t,p_i^t),
\end{equation}
where
$
A_{\W}:=\bigl\{p_i^t\in\sing(X_t)\mid p_i^t\to\W\text{ as }t\to0\bigr\},$
and $\mu(X_t,p_i^t)$ denotes the Milnor number, equivalently the Poincar\'e--Hopf index, of $X_t$ at $p_i^t$. Although the value in \eqref{dpnm} depends on the deformation, Theorem~\ref{theorem2} establishes a uniform lower bound determined by the embedded contribution along $W$. Equivalently, it answers the following question affirmatively:

\begin{center}
\textbf{Question.} \textit{Does there exist a lower bound $\mu_0(X,W)$ such that
$\mu(X_t,\W)\geq\mu_0(X,\W)$ for every deformation $\{X_t\}$ as above?}
\end{center}

The case of a positive-dimensional singular set requires arguments that do not arise in the isolated case. If $X$ has an isolated zero, finite determinacy reduces the local study to a finite jet, and the Milnor number is the multiplicity of that zero. If, however, $\sing(X)$ contains a positive-dimensional component $W$, a perturbation may produce several isolated singularities near $W$. These singularities may move within a fixed neighbourhood of $W$, coalesce with $W$ as the parameter tends to zero, or contribute on the hyperplane at infinity after projective compactification. The problem is therefore to determine and control the total contribution of the singularities converging to $W$.

\noindent
Soares \cite{MGS} established sharp upper bounds for the Poincar\'e--Hopf index of a holomorphic vector-field germ $X:(\C^n,0)\to(\C^n,0)$ with an isolated zero at the origin. His proof uses $\mathcal K$-equivalence to replace $X$ by a polynomial vector field $Y$ of suitable degree, still with an isolated zero at $0\in\C^n$. Since the Poincar\'e--Hopf index is invariant under $\mathcal K$-equivalence,
\[
\mathcal I_0(Y)=\mathcal I_0(X).
\]

For a vector field with an isolated zero, Mather's finite-determinacy theorem implies that the germ $X:(\C^n,0)\to(\C^n,0)$ is $\mathcal K$-equivalent to a finite jet $\mathbf J_0^kX$. Thus $X$ may be replaced, up to $\mathcal K$-equivalence, by the polynomial vector field $\mathbf J_0^kX$, which has an isolated zero at $0$ and satisfies
\[
\mathcal I_0(\mathbf J_0^kX)=\mathcal I_0(X).
\]

The polynomial vector field $\mathbf J_0^kX$ induces a one-dimensional singular holomorphic foliation on $\mathbb P^n$. After an analytic perturbation which removes any positive-dimensional component of the singular set, the Baum--Bott theorem relates the global invariants of this foliation to the local index at the origin. Here $\mathcal I_0(X)$ denotes the Poincar\'e--Hopf index of $X$ at $0$. Soares proved the following theorem.

\begin{theorem}[\cite{MGS}]\label{ms1}
Let $X = X_1\frac{\partial}{\partial z_1}+\cdots+X_n\frac{\partial}{\partial z_n}$
be a germ of a holomorphic vector field at $0 \in \C^n$ such that $0$ is an isolated
zero of $X$, and let $k$ be the degree of $\mathcal K$-determinacy of $X$.
Then the following assertions hold.
\begin{enumerate}
\item[(i)] If $k=1$, then $\mathcal I_0(X)=1$.
\item[(ii)] If $k>1$ and $\mathbf{J}_0^kX= gR+\sum_{j=1}^{k-1}Y_j$, where
$R=\sum_{i=1}^{n}z_i\frac{\partial}{\partial z_i}$ is the radial vector field,
$g\in\C[z_1,\ldots,z_n]$ is homogeneous of degree $k-1$, and each $Y_j$ has homogeneous
components of degree $j$, then
\[
\mathcal I_0(X)\leq \sum_{i=1}^{n}(k-1)^i.
\]
\item[(iii)] If $k>1$ and $\mathbf{J}_0^kX=\sum_{j=1}^{k}Y_j$, where each $Y_j$ has
homogeneous components of degree $j$ and $Y_k$ is not of the form $gR$ (with $g$ and $R$
as above), then
$
\mathcal I_0(X)\leq k^n.
$
\end{enumerate}
\end{theorem}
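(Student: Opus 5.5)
The plan is to pass from the germ to a polynomial vector field of degree $k$ on $\C^n$, extend it to a one-dimensional foliation on $\pn$, and bound $\mathcal I_0$ by the total number of singularities counted with multiplicity.

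Case (i) is immediate. If the $1$-jet determines $X$, then $X$ is $\mathcal K$-equivalent to its linear part $DX(0)z$. This linear field has an isolated zero at $0$ only if $\det DX(0)\neq0$. Hence $0$ is nondegenerate and $\mathcal I_0(X)=\operatorname{sign}$-free multiplicity $=\dim\oo_0/(X_1,\dots,X_n)=1$.

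For $k>1$ we replace $X$ by $Y=\mathbf J_0^kX$, using Mather's finite-determinacy theorem and the $\mathcal K$-invariance of the index, so that $\mathcal I_0(Y)=\mathcal I_0(X)$. The polynomial field $Y$ defines a foliation $\fol$ on $\pn$.

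In case (iii), $Y_k$ is not of the form $gR$. The standard chart computation at infinity then shows that $\fol$ has degree exactly $k$: the hyperplane at infinity is not invariant in a degenerate way that lowers the degree. When all singularities of $\fol$ are isolated, the Baum--Bott (or Darboux/Jouanolou) count gives
\[
\sum_{p\in\sing(\fol)}\mu(\fol,p)=\sum_{i=0}^{n}k^i .
\]
Here $0$ contributes $\mathcal I_0(Y)$. The remaining contributions are at least the number of singular points on the hyperplane at infinity. The restriction of $\fol$ to $H_\infty\cong\mathbb P^{n-1}$ is a degree-$k$ foliation, or at worst $H_\infty$ is invariant, and it carries at least $\sum_{i=0}^{n-1}k^i$ singularities counted with multiplicity. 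Subtracting gives $\mathcal I_0\le k^n$.

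In case (ii), $Y_k=gR$ with $g$ homogeneous of degree $k-1$. The foliation then has degree $k-1$, and $H_\infty$ is not invariant: the leading part is radial, so at infinity the field is transverse to $H_\infty$ off $\{g=0\}$. The total count is now $\sum_{i=0}^{n}(k-1)^i$. The singularities at infinity correspond to the zeros of a degree-$(k-1)$ structure on $\mathbb P^{n-1}$, given by the tangency of $Y_{k-1}$ combined with $g$, and they contribute at least the constant term $1$. This yields $\mathcal I_0\le\sum_{i=1}^{n}(k-1)^i$. More precisely, the inequality should follow from $\mathcal I_0\le\text{total}-(\text{contributions away from }0)$, with the contributions at infinity bounded below by $1$.

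The main obstacle is that $\fol$ may have non-isolated singularities, for instance curves of zeros at infinity or elsewhere, where Baum--Bott does not directly give a point count. I would handle this as the text indicates. First perturb $Y$ by adding generic terms of degree at most $k$ that vanish to high order at $0$, for example $\epsilon$ times generic monomials of degree between $k+1$ and $k$ that preserve the $k$-jet structure. Equivalently, perturb the homogeneous coefficients at infinity while keeping $\mathbf J_0^kY$ $\mathcal K$-equivalent near $0$. This makes all singularities isolated without changing the degree type, (ii) versus (iii), and by $k$-determinacy without changing $\mathcal I_0$.

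Then apply the count to the perturbed foliation and use upper semicontinuity: the singularities near $0$ of the perturbation have total multiplicity $\mathcal I_0(Y)$, and the rest contribute nonnegatively. Making the lower bound for the contributions at infinity rigorous, namely at least $\sum_{i<n}k^i$ in case (iii) and at least $1$ in case (ii), is the delicate point. I would try to get it by applying the same Baum--Bott count inductively to the induced foliation, or to the zero scheme, on $H_\infty$.
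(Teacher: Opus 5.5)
The paper gives no proof of this theorem. It quotes Soares and only sketches the route you follow: pass to $\mathbf J_0^kX$ using $\mathcal K$-invariance of the index, projectivise, perturb to isolated singularities, and count with Baum--Bott. Your cases (i) and (iii) are essentially sound; (iii) is really B\'ezout plus conservation of multiplicity. Your perturbation step is misstated, though: terms of degree $\le k$ cannot vanish to order $>k$ at $0$. You also do not need to keep the $k$-jet. The zeros near $0$ of a small generic perturbation within the same class (non-radial top part in (iii), $g'R$ plus lower-order terms in (ii)) automatically have total multiplicity $\mathcal I_0$.

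The genuine gap is in (ii): the claim that the singularities at infinity contribute at least $1$ is false.

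\emph{Why the claim fails.} The degree-$(k-1)$ foliation is induced by $Z=-g\,\partial_{z_0}+\sum_{i=1}^n\sum_{j=1}^{k-1}z_0^{k-1-j}Y_{j,i}\,\partial_{z_i}$. A point $[0:z]\in H_\infty$ is singular iff $g(z)=0$ and $Y_{k-1}(z)\wedge z=0$. For generic data the second locus is finite and misses the hypersurface $\{g=0\}$, so there may be no singularity at infinity at all. Since $H_\infty$ is not invariant, no inductive count on it is available either.

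\emph{A germ satisfying (ii) with nothing at infinity.} Take $X=(z_1+z_2)R+z_1\partial_{z_1}$ on $\C^2$. Its ideal at $0$ is $(z_1,z_2^2)$, so it satisfies the hypotheses of (ii) with $k=2$ and attains the bound $\mathcal I_0=2$. The third singularity of its degree-one foliation is the affine point $(-1,0)$, and there is none at infinity.

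\emph{Your argument proves a false statement.} You use $k$-determinacy only to replace $X$ by its $k$-jet. So your argument applies verbatim to $X=z_1R+z_2\partial_{z_1}=(z_2+z_1^2,\,z_1z_2)$ with $k=2$ and would give $\mathcal I_0\le 2$. But this field induces the degree-one foliation of $\mathbb P^2$ given by a nilpotent $3\times3$ Jordan block. Its only singular point is the origin, with $\mathcal I_0=3$. The theorem is not contradicted only because this germ is not $2$-determined: $(z_2+z_1^2,\,z_1z_2+z_1^3)$ has the same $2$-jet and a curve of zeros. Likewise, for $n=1$ the germ $z^k\partial_z$ lies in case (ii) with index $k>k-1$. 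So (ii) tacitly needs $n\ge2$, and your argument does not see this.

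\emph{What is missing.} The ``$-1$'' in (ii) is not a contribution from $H_\infty$. It must come from the hypothesis that $X$ is $k$-determined, used as a constraint on the local algebra at $0$. That constraint has to prevent the origin from absorbing all $\sum_{i=0}^n(k-1)^i$ singularities of the foliation. Your proposal contains no such step.
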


Esteves and Vainsencher \cite{EV} obtained the same bounds by means of Fulton's intersection theory.

In \cite{GC1,AG}, the corresponding question was considered for one-dimensional holomorphic foliations on $\pn$, with $n\geq3$. Let $\fol$ be such a foliation, of degree $k$, and suppose that its singular set is the disjoint union
\[
\sing(\fol)=\W_0 \,\cup\, \{p_1,\ldots,p_r\},
\]
where
$
\W_0 = Z(f_1,\ldots,f_d)
:=\{z\in \pn \mid f_i(z)=0,\ i=1,\ldots,d\}
$
is a smooth complete intersection subvariety of codimension $d\geq 2$.
Here each $f_i$ is a reduced homogeneous polynomial of degree $k_i$.
Let $T_{\W_0}$ and $\nn_{\W_0}$ denote the tangent and normal bundles of $\W_0$ in $\pn$,
respectively, and let $\pi_1:\pnt\to \pn$ be the blow-up of $\pn$ along $\W_0$,
with exceptional divisor $\E_1=\pi_1^{-1}(\W_0)$.

Assume that $\fol$ is \emph{special along $\W_0$} in the sense of \cite{GC1,AG}. The Milnor number $\mu(\fol,\W_0)$, defined by \eqref{dpnm}, is computed by a holomorphic perturbation $\fol_t$, defined for $0<|t|<\ep$ with $\ep>0$ sufficiently small, having the following properties:
\begin{enumerate}
\item[(i)] $\dps\lim_{t\to 0}\fol_t=\fol$, and $\deg(\fol_t)=\deg(\fol)=k$ for all $t$;
\item[(ii)] for any $t\neq 0$, the vanishing order of $\pi_1^{*}\fol_t$ along $\E_1$ satisfies
\[
m_{\E_1}(\pi_1^*\fol_t)=
\begin{cases}
m_{\E_1}(\pi_1^*\fol), & \text{if $\W_0$ is non-dicritical},\\[2pt]
m_{\E_1}(\pi_1^*\fol)-1, & \text{if $\W_0$ is dicritical};
\end{cases}
\]
\item[(iii)] if $m_{\E_1}(\pi_1^*\fol_t)=0$, then $\W_t$ is $\fol_t$-invariant for all $t\neq 0$,
where $\W_t$ is a holomorphic deformation of $\W_0$ with $\dps\lim_{t\to 0}\W_t=\W_0$;
\item[(iv)] if $m_{\E_1}(\pi_1^*\fol_t)\geq 1$, then
$
\sing(\fol_t)=\W_t \,\cup\, \{p_1^t,\ldots,p_{s_t}^t\},
$
and $\fol_t$ is special along $\W_t$ for all $t\neq 0$.
\end{enumerate}

The formula obtained in \cite{GC1,AG} is
\begin{equation}\label{nmiln}
\mu(\fol,\W_0)=-\nu(\fol,W_0,\varphi_0)+N(\fol_0,A_{\W_0})\geq-\nu(\fol,W_0,\varphi_0).
\end{equation}
Here
\begin{align}\label{nu_equ}
\nu(\fol,W_0,\varphi_0)=-\deg(W_0)\sum_{|a|=0}^{n-d}\sum_{m=0}^{n-d-|a|}(-1)^{\delta_{|a|}^{m}}\frac{\varphi_{a}^{(m)}(\ell)}{m!}(k-1)^{m}\sigma_{a_1}^{(d)}\tau_{a_2}^{(d)}\mathcal{W}_{\delta_{|a|}^{m}}^{(d)},
\end{align}
$k=\deg(\fol_0)$, $a=(a_1,a_2)$, $0\leq a_1 \leq d$, $0\leq a_2 \leq n-d$, $|a|=a_1+a_2$,
$\delta_{|a|}^{m}:=n-d-|a|-m$; and $\ell$ is given by
\[
\ell=
\begin{cases}
 m_{\E_1}\bigl(\pi_1^*\fol_0\bigr), & \text{if $\W_0$ is non-dicritical},\\
 m_{\E_1}\bigl(\pi_1^*\fol_0\bigr)-1, & \text{if $\W_0$ is dicritical}.
\end{cases}
\]
Moreover,
\begin{align*}
\varphi_a(x)&=x^{n-d-a_2}(1+x)^{d-a_1},
& \varphi_a^{(m)}(x)&=\frac{d^m}{dx^m}\varphi_a(x),\\
\mathcal W_\delta^{(d)}
&:=\mathcal W_\delta^{(d)}(k_1,\ldots,k_d)
=\sum_{i_1+\cdots+i_d=\delta}k_1^{i_1}\cdots k_d^{i_d}.
\end{align*}
The term $N(\fol_0,A_{\W_0})$ denotes the number of embedded closed points associated with $W_0$, counted with multiplicity.

It was proved in \cite{AGR} that the Milnor number $\mu(\fol,\W_0)$ defined by \eqref{nmiln} is a \emph{topological invariant}. The preceding formula gives the contribution of each positive-dimensional component of the singular locus.

\begin{theorem}\label{theorem1}
Let $\fol$ be a holomorphic foliation by curves on $\mathbb{P}^{n}$, with $n\geq3$, of degree $k$.
Assume that its singular locus $\sing(\fol)$ is the disjoint union of smooth, non-dicritical, scheme-theoretic complete intersections
$W_1,\ldots,W_r$ of pure codimensions $d_j\geq 2$, together with isolated points $p_1,\ldots,p_s$.
Then the following assertions hold.
\begin{enumerate}
\item[(i)]
\[
\sum_{i=1}^{s}\mu(\fol,p_i)
=
\sum_{i=0}^{n}k^i
+\sum_{j=1}^{r}\nu(\fol,W_j,\varphi_0)
-\sum_{j=1}^{r}N(\fol,A_{W_j}) ,
\]
\item[(ii)]
\[
\mu(\fol,W_j)
:=
N(\fol,A_{W_j})-\nu(\fol,W_j,\varphi_0)
\geq -\,\nu(\fol,W_j,\varphi_0),
\qquad j=1,\ldots,r.
\]
\end{enumerate}
Here $N(\fol,A_{W_j})$ denotes the number of embedded closed points associated with $W_j$, counted with multiplicity.
\end{theorem}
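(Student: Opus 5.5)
The plan is to derive the global formula from the Baum--Bott/Poincaré--Hopf count for foliations by curves on $\pn$, combined with the local formula \eqref{nmiln} of \cite{GC1,AG} applied to each component. First, for a foliation of degree $k$ on $\pn$ whose singularities are all isolated, the total Milnor number equals $\sum_{i=0}^{n}k^i$: it is the top Chern class $c_n(T\pn\otimes\oo(k-1))$. We choose a holomorphic perturbation $\fol_t$ of degree $k$ with finite singular set for $0<|t|<\ep$. One way is to take the perturbation of \eqref{nmiln} simultaneously near each $W_j$ and iterate until the singular set is finite; alternatively, combine the special perturbations. Then $\sum_{p\in\sing(\fol_t)}\mu(\fol_t,p)=\sum_{i=0}^n k^i$ for every $t\neq0$.

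Next we split $\sing(\fol_t)$ according to the limit as $t\to0$. Since $\sing(\fol)$ is the disjoint union of the $W_j$ and the $p_i$, we fix pairwise disjoint neighbourhoods $U_j\supset W_j$ and $B_i\ni p_i$. By upper semicontinuity of the singular set, for small $t$ every singular point of $\fol_t$ lies in one of them. Conservation of number (invariance of the Poincaré--Hopf index sum on a small ball around an isolated zero) gives, for each $i$, that the sum of $\mu(\fol_t,\cdot)$ over points in $B_i$ equals $\mu(\fol,p_i)$. The points in $U_j$ are exactly $A_{W_j}$, and by definition \eqref{dpnm} together with formula \eqref{nmiln} their total contribution tends to $\mu(\fol,W_j)=N(\fol,A_{W_j})-\nu(\fol,W_j,\varphi_0)$. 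Since all quantities are integers and are constant for small $t$, we obtain
\[
\sum_{i=0}^n k^i=\sum_{i=1}^s\mu(\fol,p_i)+\sum_{j=1}^r\bigl(N(\fol,A_{W_j})-\nu(\fol,W_j,\varphi_0)\bigr),
\]
which rearranges to (i). Statement (ii) then follows from $N(\fol,A_{W_j})\ge0$, because it counts points with positive multiplicities (Milnor numbers of isolated zeros are positive).

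The main obstacle is to justify that the local formula \eqref{nmiln}, proved in \cite{GC1,AG} when $\W_0$ is the only positive-dimensional component and $\fol$ is special along it, applies to each $W_j$ separately in the presence of the others. Non-dicriticality gives $\ell=m_{\E_1}(\pi_1^*\fol)$. The key point is that $\nu$ is computed from Chern data of the blow-up along $W_j$ localized at $\E_1$ (Baum--Bott residue along $W_j$), so it depends only on a neighbourhood of $W_j$. I would argue via the localization of Baum--Bott residues of Bracci--Suwa \cite{BracciSuwa}: the residue at $W_j$ is a local invariant, and the perturbation may be chosen supported near one component at a time. Each $\fol$ should be special along every $W_j$, which should be verified or assumed, since smooth scheme-theoretic complete intersections satisfy the hypotheses of \cite{AG}.
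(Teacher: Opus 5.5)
Your argument is correct and follows essentially the route the paper relies on. The paper gives no separate proof of Theorem~\ref{theorem1}; it obtains the result by combining the count $\sum_{i=0}^{n}k^i$ for a degree-$k$ foliation on $\pn$ with isolated singularities with the componentwise formula \eqref{nmiln} of \cite{GC1,AG}, and (ii) is immediate from $N(\fol,A_{W_j})\geq 0$.

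One step should be stated explicitly. Each $W_j$ is compact, so the total index of $\fol_t$ in an isolating neighbourhood of $W_j$ (its localized Baum--Bott residue) does not depend on which degree-$k$ perturbation you choose. That independence is what lets you transfer \eqref{nmiln}, which is obtained with the special perturbation, to your generic one. You do not need a perturbation supported near a single component, and no such holomorphic perturbation exists on $\pn$.
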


The preceding global formula identifies the contribution supported on a positive-dimensional component. We now establish its local counterpart and prove that the embedded contribution gives a sharp lower bound for every admissible holomorphic perturbation.

\begin{theorem}\label{theorem2}
Let $\fol$ be a germ of a holomorphic foliation by curves on $\mathbb{C}^n$, with $n\geq3$.
Let $\W\subset \sing(\fol)$ be a smooth complete intersection of pure codimension $d\geq2$.
Assume that $\sing(\fol)$ consists of $\W$ together with, possibly, finitely many isolated points.

Let $N(\fol,A_{\W})$ be the number of embedded closed points associated with $\W$ in the perturbative sense described above, counted with multiplicity.
If $N(\fol,A_{\W})$ is finite, then for every deformation $\{\fol_t\}$ of $\fol$, defined for $0<|t|<\epsilon$ with $\epsilon>0$ sufficiently small, such that $\lim_{t\to 0}\fol_t=\fol$ and $\sing(\fol_t)$ consists only of isolated points, one has
\begin{equation}\label{eq:lowerbound}
\mu(\fol_t,\W)
:=
\lim_{t\to 0}\sum_{p_i^t\in A_{\W}^t}\mu(\fol_t,p_i^t)
\geq N(\fol,A_{\W}),
\end{equation}
where
\[
A_{\W}^t
=
\bigl\{\, p\in \sing(\fol_t)\ \big|\ \lim_{t\to 0}p\in \W \,\bigr\}.
\]
Moreover, if $\fol$ is totally simple along $\W$, then there exists a holomorphic perturbation $\fol_t$ of $\fol$ such that
\begin{equation}\label{eq:totallysimplezero}
\mu(\fol_t,\W)
=
\lim_{t\to 0}\sum_{p_i^t\in A_{\W}^t}\mu(\fol_t,p_i^t)
=0.
\end{equation}
\end{theorem}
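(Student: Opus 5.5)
The plan is to globalize the germ and then compare two perturbations through the global formula of Theorem~\ref{theorem1}. Since $\W$ is a smooth complete intersection of codimension $d$, shrinking if necessary, I take $\W=Z(f_1,\ldots,f_d)$ with $f_i$ holomorphic and $df_1\wedge\cdots\wedge df_d\neq0$ along $\W$. After a truncation one may assume $\fol$ is induced by a polynomial vector field of degree $k$ whose associated foliation on $\pn$ has singular set
\[
\overline{\W}\cup\{p_1,\ldots,p_s\}\cup\{\text{points at infinity}\}.
\]
Here $\overline{\W}$ is the closure, still a smooth complete intersection near the region of interest. This uses finite determinacy of the ideal of $\sing(\fol)$ near $\W$ together with a generic linear change of coordinates, so that infinity contributes only isolated points.

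For the lower bound I would argue by conservation of the total index. Fix a small tubular neighbourhood $U$ of $\W$ (relatively compact in the local setting) whose boundary contains no singular point of $\fol$. For $|t|$ small, the sum $\sum_{p\in\sing(\fol_t)\cap U}\mu(\fol_t,p)$ is independent of the deformation, by stability of the total Poincar\'e--Hopf index under homotopy without zeros on $\partial U$. In the projective model this common value is
\[
\sum_{i=0}^n k^i-\sum_{p\notin U}\mu(\fol,p).
\]
In the special deformation of \cite{GC1,AG}, this quantity splits as $N(\fol,A_{\W})-\nu(\fol,\W,\varphi_0)$, where the term $-\nu$ is carried by points that stay in $U$ but not on $\W$ (or go to infinity in the compactified picture). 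For an arbitrary deformation the points in $U$ split into those converging to $\W$ and those that do not. Each embedded closed point of the scheme supported on $\W$ is a point where the limit ideal is non-reduced transversally. By upper semicontinuity of the colength, any deformation must leave at least that colength of zeros converging to it. This gives $\mu(\fol_t,\W)\geq N(\fol,A_{\W})$ once the embedded points are shown to be the only obstruction.

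Concretely, I would write $X=\sum_j f_j Y_j + Z$, where $Z$ lies in $I_{\W}^2$ away from the embedded points. I would then show the contribution near a generic point of $\W$ can be driven to zero, while near an embedded point it is bounded below by the local intersection multiplicity $\dim\oo/(X_1,\ldots,X_n)$ of the saturation-quotient. Summing these local bounds gives \eqref{eq:lowerbound}.

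For the second statement I would use the hypothesis that $\fol$ is totally simple along $\W$, meaning $N(\fol,A_{\W})=0$ and the normal part $X\equiv\sum_j f_jY_j$ has a generic (nondegenerate) linear part $(f_i)\mapsto (Y_j(f_i))$ along $\W$. I would take $\fol_t$ generated by $X+tV$, with $V$ a constant or linear vector field transverse to $\W$ chosen generically. The zeros of $X+tV$ near $\W$ satisfy $\sum_j f_jY_j=-tV$. By nondegeneracy, this forces $f=O(t)$ along a section, and one checks that solutions exist only where an auxiliary map $\W\to\C^d$ vanishes. For generic $V$ these solutions either leave every compact neighbourhood or converge to finitely many points whose index sum is $\int$ of a top Chern class that vanishes locally. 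Hence the limit in \eqref{eq:totallysimplezero} is $0$.

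I expect the main obstacle to be the lower bound's semicontinuity step: showing that the isolated zeros converging to $\W$ have total multiplicity at least the colength of the embedded components, uniformly over all deformations and not just the special ones of \cite{GC1,AG}. I would attack it through the family of ideals $(X_t)$ over the $t$-disc and flatness of the closure of the isolated part.
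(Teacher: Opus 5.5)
There is a genuine gap in each half of your argument.

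\textbf{Lower bound.} You flag the semicontinuity step as the main obstacle. Under your scheme-theoretic reading of ``embedded point'' it is not just unproven but false. Upper semicontinuity (conservation of number) of $\dim_{\C}\oo_p/(X_t)$ works at an isolated zero, where the colength at $t=0$ is finite. At a point of a positive-dimensional zero set that colength is infinite, and an embedded primary component of $(X_1,\ldots,X_n)$ is not forced to survive as zeros of $X_t$. For instance, on $\C^3$ with coordinates $(x_1,x_2,y)$, take $X=x_1\partial_{x_1}+x_2^2\partial_{x_2}+x_2y\,\partial_y$. Then $\sing(X)=W=\{x_1=x_2=0\}$ and $(X_1,X_2,X_3)=(x_1,x_2)\cap(x_1,x_2^2,y)$, so $0$ is an embedded point. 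Yet $X+t\,\partial_y$ has no zeros at all.

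So $N(\fol,A_W)$ cannot be read off a primary decomposition. It is the perturbative count of the paper, and the inequality must come from comparing an arbitrary deformation with a reference one. That is the route of Theorem~\ref{thm:polynomial-approximation}:
\begin{itemize}
\item straighten $W$ by $\varphi=(f_1,\ldots,f_d,z_{d+1},\ldots,z_n)$ and truncate;
\item build the special deformation $Y_{\kappa,t}$, then $\widetilde Y_t=Y_{\kappa,t}+t\alpha$, whose zeros converging to $W_0$ have total multiplicity $N$;
\item compare any other deformation with $\widetilde Y_t$ on a fixed precompact neighbourhood.
\end{itemize}
Your index-conservation step cannot replace this. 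The conserved total in a tube around $\overline{W}\subset\pn$ includes the zeros tending to $\overline{W}\cap H^\infty$. How that total splits between the affine part and infinity depends on the deformation, which is exactly what the Examples exhibit. Also, a field with non-isolated zeros is not finitely $\mathcal K$-determined, so ``finite determinacy of the ideal'' is not available to justify your truncation.

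\textbf{Totally simple case.} A generic $V$ transverse to $W$ is the wrong perturbation. In straightened coordinates $w=(w_N,w_T)$, write $Y=(A(w)w_N,\,B(w)w_N)$ with $A$ invertible along $W_0$, and $V=(V_N,V_T)$. A zero of $Y+tV$ satisfies $w_N=-tA^{-1}V_N$, and then $BA^{-1}V_N=V_T$. So the limit points are the zeros of a map $W_0\to\C^{n-d}$ (not $\C^d$). For generic $V$ these zeros are isolated and genuinely attract zeros of $Y+tV$; no local Chern-class argument removes them.

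For example, $X=z_1\partial_{z_1}+z_2\partial_{z_2}+z_1P_m(z_3)\partial_{z_3}$ is totally simple along $\{z_1=z_2=0\}$. For generic constant $V=(v_1,v_2,v_3)$, the field $X+tV$ has $m$ nondegenerate zeros $(-tv_1,-tv_2,\gamma)$ with $P_m(\gamma)=v_3/v_1$, all converging to $W$. The last Example of the paper shows the same phenomenon.

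The missing idea is to perturb only along $W$: $Y_t=Y+t(0,\ldots,0,\epsilon_{d+1},\ldots,\epsilon_n)$. The normal components are then unchanged, so their zero set near $W_0$ is $W_0$ itself. On $W_0$ the tangential components equal $t\epsilon_j\neq0$, hence $Y_t$ has no zeros near $W_0$. This is the proof of Theorem~\ref{thm:totally-simple-perturbation}. Note also that $N(\fol,A_W)=0$ is not part of Definition~\ref{def:totally-simple}; only the invertibility of the normal $d\times d$ block is assumed.
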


The local theorem has an immediate consequence for holomorphic function germs with non-isolated critical loci. Let
$
f:(\mathbb C^n,0)\longrightarrow(\mathbb C,0)
$
be a holomorphic germ and set $X=\nabla f$. Then
$
\sing(X)=\operatorname{Crit}(f),
$
and, at every isolated critical point $p$, the Poincar\'e--Hopf index of $\nabla f$ is equal to the Milnor number of $f$ at $p$:
\[
\mathcal I_p(\nabla f)
=
\dim_{\mathbb C}
\frac{\mathcal O_{\mathbb C^n,p}}
{\left(
\frac{\partial f}{\partial z_1},\ldots,
\frac{\partial f}{\partial z_n}
\right)}
=
\mu(f,p);
\]
see \cite{Milnor}. Consequently, Theorem~\ref{theorem2} gives a lower bound for the total Milnor number of the isolated critical points produced near a smooth positive-dimensional component of the critical locus. If the deformation is a morsification, these critical points are non-degenerate, and the same result becomes a lower bound for the number of Morse critical points converging to that component.

\begin{corollary}\label{coro:gradient-introduction}
Let
$
f:(\mathbb C^n,0)\longrightarrow(\mathbb C,0),
\  n\geq3,
$
be a holomorphic germ. Assume that $\operatorname{Crit}(f)$ contains a smooth local complete-intersection component
$
W\subset\operatorname{Crit}(f)
$
of pure codimension $d\geq2$, and that there are at most finitely many isolated critical points away from $W$.

Let $N(\nabla f,A_W)$ denote the embedded contribution associated with $W$ in the perturbative sense described above. If $f_t$ is a holomorphic deformation of $f$ such that $\operatorname{Crit}(f_t)$ is finite for every $t\neq0$, set
\[
A_W^t:=\bigl\{p\in\operatorname{Crit}(f_t)\mid p\to W\text{ as }t\to0\bigr\}.
\]
Then
\[
\lim_{t\to0}\sum_{p\in A_W^t}\mu(f_t,p)
\geq
N(\nabla f,A_W).
\]
In particular, if $f_t$ is a morsification of $f$, then
\[
\lim_{t\to0}\#A_W^t
\geq
N(\nabla f,A_W).
\]
\end{corollary}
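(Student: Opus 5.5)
The corollary will follow from Theorem~\ref{theorem2} applied to the gradient vector field, plus the identification of Poincar\'e--Hopf indices with Milnor numbers. Set $X=\nabla f=\sum_i \frac{\partial f}{\partial z_i}\frac{\partial}{\partial z_i}$, viewed as a germ of a holomorphic vector field, hence of a foliation by curves $\fol$, on $(\mathbb C^n,0)$. Then $\sing(\fol)=\operatorname{Crit}(f)$. This set is $W$ together with finitely many isolated points, and $W$ is a smooth complete intersection of pure codimension $d\geq2$. So $\fol$ satisfies the standing hypotheses of Theorem~\ref{theorem2}, and $N(\fol,A_W)=N(\nabla f,A_W)$ by definition.

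Next, given a deformation $f_t$ with $\operatorname{Crit}(f_t)$ finite for $t\neq0$, I take $\fol_t$ to be the foliation induced by $X_t=\nabla f_t$. Three facts need checking. First, $X_t\to X$ as $t\to0$, because holomorphic dependence on $t$ passes to partial derivatives, uniformly on compacts. Second, $\sing(\fol_t)=\operatorname{Crit}(f_t)$ consists only of isolated points. Third, $A_W^t$ as defined in the corollary coincides with the set $A_{\W}^t$ of Theorem~\ref{theorem2}. At each isolated zero $p$ of $\nabla f_t$, the Poincar\'e--Hopf index equals the local algebra dimension $\dim_{\mathbb C}\mathcal O_{\mathbb C^n,p}/(\partial f_t/\partial z_1,\ldots,\partial f_t/\partial z_n)=\mu(f_t,p)$, as recalled before the corollary (see \cite{Milnor}). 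Hence $\mu(\fol_t,p)=\mu(f_t,p)$. Summing over $A_W^t$ and passing to the limit, \eqref{eq:lowerbound} gives $\lim_{t\to0}\sum_{p\in A_W^t}\mu(f_t,p)\geq N(\nabla f,A_W)$. If $N(\nabla f,A_W)$ is infinite, the inequality should be read as $+\infty$ on both sides, or the finiteness assumption of Theorem~\ref{theorem2} should be imposed implicitly; I would note this explicitly.

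For the morsification statement, every critical point of $f_t$ with $t\neq0$ is non-degenerate, so the Hessian is invertible and $\mu(f_t,p)=1$. The sum therefore equals $\#A_W^t$, and the bound becomes $\lim_{t\to0}\#A_W^t\geq N(\nabla f,A_W)$. Since the count is integer-valued and locally constant for small $t$, the limit exists.

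The only delicate point is whether a gradient deformation $\nabla f_t$ counts as an admissible deformation in Theorem~\ref{theorem2}, which quantifies over all deformations with isolated singularities. Once that is granted, the argument is routine. The remaining care is needed in the convergence of $\nabla f_t$ and in the fact that the limit in \eqref{dpnm} exists, which I would take from the setup of Theorem~\ref{theorem2}.
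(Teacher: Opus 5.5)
Your proposal is correct and is essentially the paper's proof: you set $X_t=\nabla f_t$, note $\sing(X_t)=\operatorname{Crit}(f_t)$ and $\mathcal I_p(\nabla f_t)=\mu(f_t,p)$ at isolated critical points, apply Theorem~\ref{theorem2}, and use $\mu(f_t,p)=1$ at Morse points. Your caveat about the finiteness of $N(\nabla f,A_W)$ is a fair point that the paper leaves implicit, but only your second option, importing the finiteness hypothesis of Theorem~\ref{theorem2}, is justified, since nothing forces the left-hand side to be infinite when $N(\nabla f,A_W)$ is.
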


The proof is given in Subsection~\ref{subsec:gradient-fields}. Thus the embedded contribution associated with $W$ gives a lower bound both for the total Milnor number arising near $W$ and, in a morsification, for the number of Morse critical points converging to $W$.
For an isolated critical point, the finite-determinacy method used by Soares for holomorphic vector fields applies in particular to the gradient field $\nabla f$ \cite{MGS}. In that case,
\[
\mu_0(f)=\mathcal I_0(\nabla f),
\]
and the resulting estimates belong to the isolated-zero theory. Corollary~\ref{coro:gradient-introduction} concerns the different situation in which the critical locus contains a positive-dimensional component.

The splitting of a positive-dimensional critical locus under deformation is classically described by L\^e cycles and L\^e numbers \cite{MasseyLeCycles}. Teissier's polar varieties and polar multiplicities relate this deformation-theoretic information to local Chern-theoretic invariants \cite{LeTeissier81}. Related connections between the Euler obstruction and indices of vector fields were established by Brasselet, L\^e and Seade \cite{BLS}.  The behaviour of polar invariants in families is governed by Gaffney's multiplicity-polar theorem \cite{Gaffney92,Gaffney93}. Corollary~\ref{coro:gradient-introduction} supplies a complementary lower bound in terms of the embedded contribution associated with $W$. In the projective complete-intersection setting considered above, the residual term $\nu(\fol,W,\varphi_0)$ is explicitly determined by the degrees and by the order of vanishing along the exceptional divisor.

\section{Preliminaries}

\subsection{$\kk$-equivalence}
We use Mather's definition of $\mathcal K$-determinacy for map germs \cite{MT1,MT2}. Let $\mathcal O_{n,n}$ denote the space of analytic map germs from $\mathbb C^n$ to $\mathbb C^n$, and let $\mathcal K$ be the group of germs of biholomorphisms
\[
H:(\mathbb{C}^n\times \mathbb{C}^n,(0,0))\longrightarrow (\mathbb{C}^n\times \mathbb{C}^n,(0,0))
\]
for which there exists a germ of biholomorphism $\varphi:(\mathbb{C}^n,0)\to (\mathbb{C}^n,0)$ such that
\begin{itemize}
\item[(i)] $H(x,0)=(\varphi(x),0)$ for all $x$;
\item[(ii)] if $f,g\in\mathcal{O}_{n,n}$ are such that $H$ sends the graph of $f$ to the graph of $g$ over $\varphi$, namely
\[
H(x,f(x))=(\varphi(x),g(\varphi(x))),
\]
then $f$ and $g$ are said to be $\mathcal{K}$-equivalent.
\end{itemize}
Equivalently, $f$ and $g$ are $\mathcal{K}$-equivalent if there exist $H\in\mathcal{K}$ and $\varphi$ as above such that
\[
H\circ (1,f)\circ \varphi^{-1}=(1,g)
\]
as germs $(\mathbb{C}^n,0)\to(\mathbb{C}^n\times \mathbb{C}^n,(0,0))$.
With this notation, a germ $f\in\mathcal{O}_{n,n}$ is called \textbf{finitely $\mathcal{K}$-determined} if there exists an integer $\ell\geq 0$ such that for every germ $g\in\mathcal{O}_{n,n}$ with $\mathbf{J}_0^{\ell}g=\mathbf{J}_0^{\ell}f$, the germs $f$ and $g$ are $\mathcal{K}$-equivalent.
In the equidimensional case, finite $\mathcal K$-determinacy is characterised as follows.

\begin{lemma}
A germ $f\in\mathcal{O}_{n,n}$ is finitely $\mathcal{K}$-determined if and only if $f^{-1}(0)$ has an isolated singularity at the origin.
\end{lemma}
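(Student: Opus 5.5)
We prove the equivalence via the Mather--Gaffney criterion, recast through the local algebra $Q(f)=\mathcal O_n/(f_1,\dots,f_n)$. The standard characterisation of finite $\mathcal K$-determinacy (Mather \cite{MT1,MT2}) says that $f$ is finitely $\mathcal K$-determined if and only if the extended tangent space
\[
T\mathcal K_e f=\mathcal O_n\Bigl\langle \tfrac{\partial f}{\partial z_1},\dots,\tfrac{\partial f}{\partial z_n}\Bigr\rangle+f^*\mathfrak m_n\cdot\mathcal O_{n,n}
\]
has finite codimension in $\mathcal O_{n,n}$. Equivalently, $\mathfrak m_n^r\mathcal O_{n,n}\subset T\mathcal K f$ for some $r$. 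So the whole argument reduces to deciding when this codimension is finite. In the equidimensional case we shall read ``$f^{-1}(0)$ has an isolated singularity at the origin'' as: the origin is an isolated point of $f^{-1}(0)$, so that $f^{-1}(0)$ is zero-dimensional. This is the only reading compatible with the use made of the lemma for vector fields with isolated zeros.

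\emph{Step 1 (finite $\Rightarrow$ finite codimension).} Suppose $f$ is finitely $\mathcal K$-determined and, for contradiction, that $f^{-1}(0)$ has positive dimension at $0$. Choose $g=f+h$ with $\mathbf J_0^\ell h=0$ and $g^{-1}(0)=\{0\}$ near the origin; for instance $h=(z_1^{N},\dots,z_n^{N})$ with $N\gg\ell$ and a generic linear change. A $\mathcal K$-equivalence maps $f^{-1}(0)$ biholomorphically onto $g^{-1}(0)$, since $H$ preserves $\mathbb C^n\times\{0\}$. Hence $f^{-1}(0)$ would be a point, which is a contradiction.

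The main obstacle is making the choice of $h$ honest. One must check that some $g$ with the same $\ell$-jet has an isolated zero. We would get this by taking $g_i=f_i+\lambda_i z_i^{N}$ with generic $\lambda_i$. By the Nullstellensatz and a dimension count, $\dim V(g)\le\dim V(f)-1$ at each step generically, and $N$ is large. If that count is delicate, we would instead use the algebraic statement $\dim_{\mathbb C}Q(f)=\infty$. Then $T\mathcal K_e f\supset f^*\mathfrak m\,\mathcal O_{n,n}$ cannot have finite codimension, because $\mathcal O_{n,n}/I_f\mathcal O_{n,n}\cong Q(f)^n$ and $T\mathcal K_ef/I_f\mathcal O_{n,n}$ is a finitely generated $Q(f)$-module of rank at most $n$ in the Jacobian direction. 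This needs care, and it is the step we expect to be hardest.

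\emph{Step 2 (isolated zero $\Rightarrow$ finite determinacy).} If $0$ is isolated in $f^{-1}(0)$, the Rückert Nullstellensatz gives $\mathfrak m_n^r\subset (f_1,\dots,f_n)$ for some $r$. Hence $\mathfrak m_n^r\mathcal O_{n,n}\subset f^*\mathfrak m_n\,\mathcal O_{n,n}\subset T\mathcal K f$. By Mather's criterion (via the homotopy method and Nakayama's lemma), $f$ is then $(r+1)$-$\mathcal K$-determined, and this is exactly what is needed. Note that the Jacobian part of $T\mathcal K f$ is not even used here; in the equidimensional case the contact ideal alone suffices.
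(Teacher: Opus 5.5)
The paper gives no proof of this lemma; it is quoted as Mather's classical characterisation, so your argument has to stand on its own. Your reading of ``isolated singularity'' as ``$0$ is an isolated point of $f^{-1}(0)$'' is the right one. Under the reading ``smooth outside $0$'' the statement fails: $f=(z_1,0)$ on $\mathbb C^2$ has a smooth zero fibre, yet $\mathcal O_{2,2}/T\mathcal K_ef\cong\mathcal O_2/(z_1)$ is infinite-dimensional. Step~2 is fine.

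\textbf{Step~1: the missing existence claim.} The gap is in Step~1, where you expected it. The reduction ($\ell$-determinacy together with $\varphi(f^{-1}(0))=g^{-1}(0)$ for a $\mathcal K$-equivalence) is sound. However, it rests entirely on the existence of some $g$ with $\mathbf J_0^\ell g=\mathbf J_0^\ell f$ and $0$ isolated in $g^{-1}(0)$, and you do not prove this. The sentence ``$\dim V(g)\le\dim V(f)-1$ at each step'' describes no actual induction. Perturbing a component replaces an equation rather than adding one, so the new zero set is not contained in the old one. What matters is genericity, not the size of $N$.

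One correct argument uses Sard. With $N>\ell$, put $G(z,\lambda)=f(z)+\bigl(\sum_{|\alpha|=N}\lambda_{i,\alpha}z^\alpha\bigr)_{i=1,\dots,n}$. For $z\neq0$ the affine map $\lambda\mapsto G(z,\lambda)$ is onto, so $G^{-1}(0)\cap\{z\neq0\}$ is a manifold of the same dimension as the parameter space. Hence for almost every $\lambda$ the set $g_\lambda^{-1}(0)\setminus\{0\}$ is discrete near $0$. Since a positive-dimensional analytic germ at $0$ has non-isolated points arbitrarily close to $0$, the origin is isolated in $g_\lambda^{-1}(0)$.

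\textbf{Your fallback.} It does not work as written. ``Rank at most $n$ in the Jacobian direction'' is no obstruction, because $Q(f)^n$ itself has rank $n$ and an $n$-generated submodule of it can have finite colength (e.g.\ all of $Q(f)^n$). What forces $\mathcal O_{n,n}/T\mathcal K_ef\cong\operatorname{coker}\bigl(\overline{Jf}\colon Q(f)^n\to Q(f)^n\bigr)$ to be infinite-dimensional is the inverse function theorem. At a point $z\in f^{-1}(0)$ with $\det Jf(z)\neq0$, $f$ is a local biholomorphism, so $z$ is isolated in $f^{-1}(0)$. Therefore $\det Jf\equiv0$ on every positive-dimensional component $C$ of $f^{-1}(0)$ through $0$. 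The support of the cokernel is $V(f)\cap V(\det Jf)\supseteq C$, hence the codimension is infinite.

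Combine this with the criterion you quote at the outset. Alternatively, argue directly: $\ell$-determinacy gives $\mathfrak m^{\ell+1}\mathcal O_{n,n}\subset T\mathcal Kf+\mathfrak m^{\ell+2}\mathcal O_{n,n}$, by computing the tangent space of the $\mathcal K$-orbit in $(\ell+1)$-jet space, and then $\mathfrak m^{\ell+1}\mathcal O_{n,n}\subset T\mathcal Kf$ by Nakayama. Either way, this settles Step~1 without any genericity lemma, and it is the standard proof of this direction.
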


When the dimension of the source exceeds that of the target, we use transverse equidimensional slices.
Let $f=(f_1,\ldots,f_d):(\mathbb{C}^n,0)\to(\mathbb{C}^d,0)$ be a holomorphic map germ with $1\leq d\leq n$, and assume that
$
\W:=Z(f_1,\ldots,f_d)
$
is a smooth complete intersection of codimension $d$.
Then, for each $z\in W$, some $d\times d$ minor of the Jacobian matrix $Jf$ is invertible.
After a linear change of coordinates, fix a decomposition $\mathbb{C}^n=\mathbb{C}^d\oplus\mathbb{C}^{n-d}$ and write $z=(x,y)$, where $x\in\mathbb{C}^d$ and $y\in\mathbb{C}^{n-d}$.
For each fixed $y\in\mathbb{C}^{n-d}$ we consider the induced equidimensional germ
\[
f_y:(\mathbb{C}^d,0)\longrightarrow (\mathbb{C}^d,0),
\qquad
f_y(x):=f(x,y)=\bigl(f_1(x,y),\ldots,f_d(x,y)\bigr).
\]
We use the following parameter-dependent analogue of $\mathcal K$-equivalence.
Let $\mathcal{K}^{(d)}$ be the group of families of biholomorphism germs
\[
H_y^{(d)}:(\mathbb{C}^d\times \mathbb{C}^d,(0,0))\longrightarrow (\mathbb{C}^d\times \mathbb{C}^d,(0,0)),
\]
depending holomorphically on the parameters $y=(y_1,\ldots,y_{n-d})$, such that for generic parameters $y\in\mathbb{C}^{n-d}$ there exists a biholomorphism
$\varphi_y:(\mathbb{C}^d,0)\to (\mathbb{C}^d,0)$ with the properties
\begin{itemize}
\item[(i)] $H_y^{(d)}(x,0)=(\varphi_y(x),0)$;
\item[(ii)] $H_y^{(d)}(x,f_y(x))=(\varphi_y(x), g_y(\varphi_y(x)))$,
\end{itemize}
where $g_y$ is defined analogously from a second germ $g$.

\begin{definition}
Two holomorphic map germs $f,g:(\mathbb{C}^n,0)\to(\mathbb{C}^d,0)$ are \textbf{$\mathcal{K}^{(d)}$-equivalent} if for generic $y\in\mathbb{C}^{n-d}$ there exist
$H_y^{(d)}\in \mathcal{K}^{(d)}$ and an associated biholomorphism $\varphi_y:(\mathbb{C}^d,0)\to(\mathbb{C}^d,0)$ such that
\begin{itemize}
\item[(i)] $H_y^{(d)}(x,0)=(\varphi_y(x),0)$;
\item[(ii)] $H_y^{(d)}(x,f_y(x))=(\varphi_y(x), g_y(\varphi_y(x)))$,
\end{itemize}
for all $x$ in a neighbourhood of $0\in\mathbb{C}^d$.
\end{definition}

\begin{theorem}[Analytic Artin approximation, cf.\ \cite{ArtinApprox2018}]\label{thm:ArtinApprox}
Let $f(x,y)$ be a vector of convergent power series in two sets of variables $x$ and $y$, namely
$f\in \mathbb{C}\{x,y\}^{m}$.
Assume that a formal power series vector $\widehat y(x)\in \mathbb{C}[[x]]^{n}$ satisfies $\widehat y(0)=0$ and $
f\bigl(x,\widehat y(x)\bigr)=0.
$
Then, for every $c\in\mathbb{N}$, there exists a convergent power series solution
$\widetilde y(x)\in \mathbb{C}\{x\}^{n}$ such that
\[
f\bigl(x,\widetilde y(x)\bigr)=0
\quad\text{and}\quad
\widetilde y(x)\equiv \widehat y(x)\pmod{(x)^c}.
\]
\end{theorem}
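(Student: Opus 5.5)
The statement immediately above is the analytic Artin approximation theorem, which the paper quotes from \cite{ArtinApprox2018} rather than proving. Still, here is how I would establish it, following Artin's original strategy: reduce to a henselian-type statement via Weierstrass preparation, and induct on the number of $x$-variables.

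\emph{Setup and induction.} Write $x=(x_1,\dots,x_r)$ and let $\mathcal I\subset\mathbb{C}\{x,y\}$ be the ideal generated by the components of $f$. I would argue by induction on $r$, with the inductive hypothesis being the full statement (including the approximation order $c$) for $r-1$ variables. The case $r=0$ is trivial.

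\emph{Reduction to a regular ideal.} Replace $\mathcal I$ by a prime ideal $\mathfrak p$ containing $\mathcal I$ and contained in the kernel of the homomorphism $\mathbb{C}\{x,y\}\to\mathbb{C}[[x]]$, $y\mapsto\widehat y(x)$. Let $h$ be the height of $\mathfrak p$ modulo its intersection with $\mathbb{C}\{x\}$.

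\emph{Jacobian criterion.} By the Jacobian criterion there are generators $g_1,\dots,g_h$ of $\mathfrak p$ and an $h\times h$ minor $\delta$ of $\partial g/\partial y$ with $\delta\notin\mathfrak p$. Consequently $\delta(x,\widehat y(x))\neq0$ in $\mathbb{C}[[x]]$. On the locus where $\delta$ is a unit one has $\mathfrak p=(g_1,\dots,g_h)$. I would use this in the form $\delta\cdot\mathfrak p\subset(g_1,\dots,g_h)$ after localization, so that it suffices to solve $g=0$.

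\emph{Tougeron's implicit function theorem.} The key tool is the estimate: if $\bar y$ is convergent with $g(x,\bar y)\equiv0 \pmod{\delta(x,\bar y)^2(x)^{c}}$, then there is a convergent exact solution congruent to $\bar y$ modulo $\delta(x,\bar y)(x)^c$. This follows from a Newton iteration together with the division estimate in $\mathbb{C}\{x\}$. So the problem reduces to producing convergent approximate solutions to high order of the system $g=0$ with $\delta$ controlled.

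\emph{Producing approximate solutions.} This is the main obstacle. After a generic linear change of $x$ coordinates, Weierstrass preparation lets me take $\delta(x,\widehat y(x))$ to be a unit times a Weierstrass polynomial in $x_r$ of degree $e$. I would then Weierstrass-divide $\widehat y$ by a high power $\delta^{2N}$, writing
\[
\widehat y=\sum_{j<2Ne}\widehat y_j(x')\,x_r^{\,j}+\delta^{2N}\widehat z,
\]
with $x'=(x_1,\dots,x_{r-1})$. Substituting this into $g$ and expanding in $x_r$ modulo the Weierstrass polynomial produces a system of analytic equations in $x'$, with unknowns $\widehat y_j(x')$ together with the coefficients of $\delta$. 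The original formal solution supplies a formal solution of this system. By the inductive hypothesis it has a convergent solution agreeing to any prescribed order. Reassembling gives a convergent $\bar y$ satisfying the hypothesis of Tougeron's estimate, and Tougeron then yields $\widetilde y$ with $\widetilde y\equiv\widehat y \pmod{(x)^c}$.

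The delicate point is keeping $\delta(x,\bar y)$ controlled, namely comparable to $\delta(x,\widehat y)$ up to a unit, which is why the coefficients of $\delta$ are carried as extra unknowns in the $x'$-system.
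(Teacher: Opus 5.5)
\textbf{Verdict.} The paper gives no proof of this statement; it quotes it from \cite{ArtinApprox2018}. Your outline (induction on the number of $x$-variables, Jacobian criterion, Tougeron's implicit function theorem, generic Weierstrass division, inductive hypothesis in $x'$) is Artin's original argument, the standard proof of this theorem, so the route is right. Dividing by $P^{2N}$ rather than $P^2$ is also a legitimate way to get the factor $(x)^c$ Tougeron's lemma needs: the Weierstrass polynomial lies in $(x)$, so $(P^{2N})\subset P^2(x)^{2N-2}$. Two steps, however, are wrong as written.

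\textbf{First gap: reducing to $g=0$.} Neither ``$\mathfrak p=(g_1,\dots,g_h)$ where $\delta$ is a unit'' nor $\delta\,\mathfrak p\subset(g_1,\dots,g_h)$ holds in general, because $V(g)$ may have further components along which $\delta\neq0$. For example, take one $x$ and one $y$, $\widehat y=0$, $\mathfrak p=(y)$ and $g_1=y(y-x)$. Then $\delta=2y-x\notin\mathfrak p$, but $\delta y\notin(g_1)$. What the Jacobian criterion gives is $\mathfrak p\,\mathbb{C}\{x,y\}_{\mathfrak p}=(g_1,\dots,g_h)\,\mathbb{C}\{x,y\}_{\mathfrak p}$, hence some $q\notin\mathfrak p$ with $q\,\mathfrak p\subset(g_1,\dots,g_h)$. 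A solution of $g=0$ then solves $f=0$ only if $q(x,\widetilde y)\neq0$. You secure this by taking $\widetilde y\equiv\widehat y$ modulo $(x)^{s+1}$ with $s=\operatorname{ord}q(x,\widehat y)$. This also forces $\mathfrak p$ to be the kernel itself rather than an arbitrary prime inside it; otherwise $\delta\notin\mathfrak p$ does not give $\delta(x,\widehat y)\neq0$.

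\textbf{Second gap: the step you call delicate.} Carrying the coefficients $a=(a_1,\dots,a_e)$ of $P_a=x_r^e+\sum_k a_k(x')x_r^{e-k}$ as unknowns controls nothing by itself. The $x'$-system must also contain the equations stating that the remainder of $\delta\bigl(x,\sum_j y_jx_r^j\bigr)$ under generic Weierstrass division by $P_a$ vanishes.
\begin{itemize}
\item Without them, the equations from $g$ modulo $P_a^{2N}$ only make $\delta(x,\bar y)$ a unit times some Weierstrass polynomial $P'$ that agrees with $P_a$ to high order but need not equal it. Nothing forces $P'^2$ to divide $g(x,\bar y)$, so Tougeron's hypothesis can fail.
\item The added equations hold formally: with $\widehat\rho=\sum_j\widehat y_jx_r^j$ one has $\delta(x,\widehat\rho)\equiv\widehat u\widehat P\equiv0\pmod{\widehat P}$.
\item For the convergent solution, agreeing with the formal one modulo $(x')^{c'}$, they give $\delta(x,\bar y)=vP_a$. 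Since $\bar y\equiv\widehat y$ modulo $(x)^m$ with $m=\min(c',2N)$, and $\operatorname{ord}P_a\leq e$, comparison with $\widehat u\widehat P$ yields $v\equiv\widehat u\pmod{(x)^{m-e}}$.
\item So $v$ is a unit once $m>e$, and only then do you get $g(x,\bar y)\in(P_a^{2N})\subset\delta(x,\bar y)^2(x)^{2N-2}$.
\end{itemize}

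\textbf{A smaller point.} The coefficients $\widehat y_j(0)$ with $j\geq1$ need not vanish. Recentre these unknowns before invoking the inductive hypothesis, which assumes $\widehat y(0)=0$.
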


Theorem~\ref{thm:ArtinApprox} replaces formal solutions by convergent solutions to any prescribed finite order. The following definition is the corresponding parameter-dependent form of Mather's $\mathcal K$-equivalence \cite{MT1,MT2}.

Two holomorphic map germs $f,g\in\mathcal O_{n,n}$ have the same $k$-\emph{jet} at $0$ if their partial derivatives at $0$ agree through order $k$, or equivalently if their Taylor series agree through total degree $k$. The $k$-jet of $f$ at $0$ is denoted by $\mathbf J_0^kf$.

\begin{definition}\label{def:Kd-determinacy}
A germ $f:(\mathbb{C}^n,0)\to(\mathbb{C}^d,0)$ is \emph{finitely $\mathcal{K}^{(d)}$-determined} if there exists an integer $\ell\geq 0$
such that, for every germ $g:(\mathbb{C}^n,0)\to(\mathbb{C}^d,0)$,
\[
\mathbf{J}_0^{\ell}g=\mathbf{J}_0^{\ell}f
\quad\Longrightarrow\quad
g \text{ is } \mathcal{K}^{(d)}\text{-equivalent to } f.
\]
The \emph{degree of $\mathcal{K}^{(d)}$-determinacy} of $f$ is the integer
$
\ell_0:=\min\bigl\{\ell\in\mathbb{N}\ \big|\ f\text{ is } \ell\text{-}\mathcal{K}^{(d)}\text{-determined}\bigr\}.
$
\end{definition}

In local coordinates $\mathbb C^n=\mathbb C^d\oplus\mathbb C^{n-d}$, with $z=(x,y)$, each fixed parameter $y$ determines the equidimensional germ
$
f_y:(\mathbb{C}^d,0)\to(\mathbb{C}^d,0),
$ $
f_y(x):=f(x,y),
$
and, by construction, $f_y(0)=0$ for $y$ near $0$.
For $y$ outside a proper analytic subset, classical Mather theory applies to the slice $f_y$.

\medskip

\begin{lemma}\label{subvariedad}
Let $W=Z(f_1,\ldots,f_d)\subset(\mathbb{C}^n,0)$ be a germ of a smooth complete intersection, where each $f_i$ is reduced, and set
$f=(f_1,\ldots,f_d):(\mathbb{C}^n,0)\to(\mathbb{C}^d,0)$.
Then there exist local coordinates $(x,y)$ on $\mathbb{C}^n=\mathbb{C}^d\oplus \mathbb{C}^{n-d}$ such that,
for generic $y\in\mathbb{C}^{n-d}$, the slice
\[
f_y:(\mathbb{C}^d,0)\to(\mathbb{C}^d,0),
\quad
f_y(x)=(f_1(x,y),\ldots,f_d(x,y)),
\]
is finitely $\mathcal{K}$-determined (in the classical, equidimensional sense).
\end{lemma}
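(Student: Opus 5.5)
Since $W$ is a smooth complete intersection germ, the plan is to straighten it first and then reduce finite determinacy of the slices to the characterisation of finite $\mathcal K$-determinacy in the equidimensional case (the Lemma preceding the parameter-dependent discussion).

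\emph{Straightening.} Because $W=Z(f_1,\ldots,f_d)$ is smooth of codimension $d$ and each $f_i$ is reduced, the differentials $df_1(0),\ldots,df_d(0)$ are linearly independent, so the Jacobian $Jf(0)$ has rank $d$. After a linear change of coordinates we may assume that the $d\times d$ minor $\partial(f_1,\ldots,f_d)/\partial(x_1,\ldots,x_d)$ is invertible at $0$, where $z=(x,y)$ and $\mathbb C^n=\mathbb C^d\oplus\mathbb C^{n-d}$. By continuity the same minor is invertible on a neighbourhood $U\times V$ of $0$.

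\emph{The slices.} Fix $y\in V$. Then $f_y:(\mathbb C^d,0)\to(\mathbb C^d,0)$ has invertible Jacobian $\partial f_y/\partial x$ at every point of $U$, so by the inverse function theorem it is a local biholomorphism near each point of $U$. Two consequences follow:
\begin{itemize}
\item the fibre $f_y^{-1}(0)$ near the relevant base point consists of isolated points;
\item the map $f_y$ is $1$-determined, since any $g$ with the same $1$-jet still has invertible differential there.
\end{itemize}

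\emph{Base point.} There is one wrinkle to handle: we need $f_y(0)=0$ in order to regard $f_y$ as a germ at $0$. By the implicit function theorem, $W\cap(U\times V)$ is a graph $x=h(y)$ with $h$ holomorphic and $h(0)=0$. Replacing $x$ by $x-h(y)$, which is a holomorphic, not linear, change of coordinates and is permitted by ``local coordinates'' in the statement, we get $W=\{x=0\}$. Then $f_y(0)=0$ for all $y\in V$, and the Jacobian condition is preserved because the change is a translation in $x$ depending on $y$.

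\emph{Conclusion.} Each $f_y$ is then a germ $(\mathbb C^d,0)\to(\mathbb C^d,0)$ with invertible derivative at $0$. Its zero set $f_y^{-1}(0)=\{0\}$ is an isolated point near $0$, so the equidimensional Lemma gives that $f_y$ is finitely $\mathcal K$-determined; indeed it is $\mathcal K$-equivalent to the identity, with determinacy degree $1$. This holds for every $y$ near $0$, and in particular for generic $y$.

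\emph{Main obstacle.} The argument is elementary, and the only delicate point is the normalisation $f_y(0)=0$ discussed under Base point. If instead one insists on purely linear coordinates, the argument should be applied at the base point $(h(y),y)$ of each slice.
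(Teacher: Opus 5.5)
Your argument is correct and is essentially the paper's: both take an invertible $d\times d$ minor of $Jf(0)$ and use the inverse/implicit function theorem to make each slice a biholomorphism germ fixing $0$. The only difference is that the paper uses $(x,y)=(f_1,\ldots,f_d,z_{d+1},\ldots,z_n)$ as coordinates, so every slice is literally the identity and your base-point normalisation $x\mapsto x-h(y)$ is handled in the same step. One small correction: $\operatorname{rank}Jf(0)=d$ is part of what ``smooth complete intersection'' means here (the paper assumes it just before the lemma), not a consequence of the $f_i$ being reduced; for example, $f=(x_1,\,x_1+x_2^2)$ has reduced components and set-theoretically smooth zero locus $\{x_1=x_2=0\}$, yet $\operatorname{rank}Jf(0)=1$.
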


\begin{proof}
Since $W$ is smooth of codimension $d$ at $0$, the Jacobian matrix of $f$ has rank $d$ at $0$.
After reordering coordinates if necessary, we may assume that the $d\times d$ minor
\begin{equation}\label{eq:Df}
Df(0)=\left[\frac{\partial f_i}{\partial z_j}(0)\right]_{1\leq i,j\leq d}
\end{equation}
is invertible.
Consider the holomorphic map germ
\[
\psi:(\mathbb{C}^n,0)\to(\mathbb{C}^n,0),
\qquad
\psi(z)=\bigl(f_1(z),\ldots,f_d(z),z_{d+1},\ldots,z_n\bigr).
\]
By \eqref{eq:Df} and the holomorphic inverse function theorem, $\psi$ is a local biholomorphism near $0$.
In the new coordinates $z'=\psi(z)=(x,y)$, the map $f$ becomes the projection to the first $d$ coordinates:
\[
f\circ \psi^{-1}(x,y)=(x_1,\ldots,x_d).
\]
For every $y$ near $0$, the slice $(f\circ\psi^{-1})_y$ is therefore the identity germ on $(\mathbb C^d,0)$. It has an isolated zero at $x=0$ and is finitely $\mathcal K$-determined. Transport by $\psi$ proves the assertion for $f_y$; in fact it holds for every sufficiently small $y$.
\end{proof}

Each component $f_i(x,y)$ has a convergent expansion in the normal variables $x$, with coefficients holomorphic in $y$:
\begin{equation}\label{eq:series-x}
f_i(x,y)=\sum_{a\in\mathbb{N}^d} x^a\, f_{i,a}(y),
\qquad
f_{i,a}(y)\in \mathbb{C}\{y\},
\end{equation}
where $a=(a_1,\ldots,a_d)$, $x^a=x_1^{a_1}\cdots x_d^{a_d}$, and $|a|=a_1+\cdots+a_d$.
Writing $f_{i,a}(y)=\sum_{b\in\mathbb{N}^{n-d}}\alpha_{i,a,b}y^b$ with $y^b=y_1^{b_1}\cdots y_{n-d}^{b_{n-d}}$, we obtain
\begin{equation}\label{eq:double-series}
f_i(x,y)=\sum_{a\in\mathbb{N}^d}\ \sum_{b\in\mathbb{N}^{n-d}} \alpha_{i,a,b}\, x^a y^b.
\end{equation}

\begin{example}
Let $f:(\mathbb{C}^n,0)\to(\mathbb{C},0)$ be a holomorphic germ whose critical set is smooth of codimension one.
After a linear change of coordinates we may assume that the critical set is the hyperplane
$
\Sigma=\{z\in\mathbb{C}^n\mid z_1=0\}.
$
Writing $x=z_1$ and $y=(z_2,\ldots,z_n)$, we may express $f$ as
$
f(x,y)=\sum_{j=m}^{\infty} a_j(y)\,x^j,\ 
a_m(0)\neq 0,
$
so $a_m(y)$ is a unit in $\mathbb{C}\{y\}$ after shrinking the neighbourhood of $0$.
Choose a holomorphic function $b_1(y)$ with $b_1(y)^m=a_m(y)$ (after restricting to a simply connected neighbourhood if needed),
and define a biholomorphism germ $\varphi_y:(\mathbb{C},0)\to(\mathbb{C},0)$ of the form
$
\varphi_y(x)=b_1(y)\,x+\sum_{j\geq 2} b_j(y)\,x^j
$
such that $\bigl(\varphi_y(x)\bigr)^m=f(x,y)$ (one may construct $\varphi_y$ recursively by comparing coefficients).
Since $b_1(y)\neq0$ near $0$, the germ $\varphi_y$ is biholomorphic. Setting $g(x,y)=x^m$ gives a $\mathcal K^{(1)}$-equivalence between $f$ and $g$: for $H_y(x,w)=(\varphi_y(x),w)$,
\[
H_y\bigl(x,f(x,y)\bigr)=\bigl(\varphi_y(x),f(x,y)\bigr)=\bigl(\varphi_y(x),(\varphi_y(x))^m\bigr)
=\bigl(\varphi_y(x),g(\varphi_y(x),y)\bigr).
\]
\end{example}

\section{Holomorphic vector fields with non-isolated singularities}
First fix the neighbourhoods in which the local indices are to be counted. Let $W$ be a codimension-$d$ complete intersection in a complex manifold $M$.
Fix $\epsilon>0$. For each $z\in W$, let $D_\epsilon(z)$ denote the disc of radius $\epsilon$ centred at $z$ in a local holomorphic chart.
Define the $\epsilon$-neighbourhood of $W$ by
\[
W_\epsilon:=\bigcup_{z\in \W} D_\epsilon(z).
\]
An open set $V_\epsilon$ is called a precompact $\epsilon$-neighbourhood of $W$ if $V_\epsilon=U\cap W_\epsilon$ for some relatively compact open subset $U\subset M$.

Let $\F$ be a one-dimensional holomorphic foliation on $\C^n$ whose singular set consists of a smooth complete intersection manifold
$W\subset \C^n$ and, possibly, finitely many isolated points.
Assume that $\W=Z(f_1,\ldots,f_d)$, where each $f_i:\C^n\to\C$ is holomorphic and $W$ is smooth of pure codimension $d\geq2$.
Unless stated otherwise, we work under the standing assumption that
\[
\sing(\F)=\W\cup\{p_1,\ldots,p_r\},
\]
for some $r\in \Z_{\geq 0}$.
On each chart $(U_\alpha,\varphi_\alpha)$, the foliation is generated by a holomorphic vector field
\[
X_{\alpha}=X|_{U_{\alpha}}=\sum_{i=1}^n X_i \frac{\partial}{\partial z_i}=(X_1,\ldots,X_n).
\]

Let $f:\C^n\to \C^d$ be defined by $f(z)=(f_1(z),\ldots,f_d(z))$.
By the holomorphic submersion theorem (after possibly shrinking the chart),
the map
\[
u=\psi(z)=\bigl(f_1(z),\ldots,f_d(z),z_{d+1},\ldots,z_n\bigr)=(\omega_1,\omega_2)\in \C^d\oplus \C^{n-d}
\]
induces a change of coordinates on $\C^n$ such that $\psi(W)=W_0$ is given by $u_1=\cdots=u_d=0$, equivalently $\omega_1=0$.
Let
\[
Y=\psi_*X=\sum_{i=1}^n Y_i \frac{\partial}{\partial w_i}.
\]
In these coordinates, each component has an expansion
\begin{equation}\label{multipli.pu}
Y_i(u):=\sum_{|a|=m_i} u_1^{a_1}u_2^{a_2}\cdots u_d^{a_d}\,g_{i,a}(u),
\end{equation}
where, for each $i$, at least one coefficient $g_{i,a}\in\oo_n$ does not lie in the ideal of $W_0$ on a suitable precompact neighbourhood.
Write $m_i:={\rm{mult}}_W(Y_i)$.
As in \cite{MAGR}, we define the multiplicity of $\F$ along $W$ by
\[
m_W(\F):=\min\{m_1,\ldots,m_n\}.
\]

Because $\W_0$ is defined by $u_1=\cdots=u_d=0$ (equivalently $\omega_1=0$), each local section $Y_i$ of
\[
Y=\psi_*X=\sum_{i=1}^n Y_i \frac{\partial}{\partial w_i}
\]
can be written as
\[
Y_i(w)=\sum_{|a|=m_i}^{\infty} u_1^{a_1}\cdots u_d^{a_d}\,Y_{i,a}(u_{d+1},\ldots,u_n)
=\sum_{|a|=m_i}^{\infty} \omega_1^{a}\,Y_{i,a}(\omega_2),
\]
where $a=(a_1,\ldots,a_d)\in \N^d$ and for each $i$ at least one coefficient $Y_{i,a}$ does not vanish along $\omega_1=0$.

\begin{theorem}\label{thm:polynomial-approximation}
Let $\F$ be a one-dimensional holomorphic foliation on $\C^n$ with
$\W=Z(f_1,\ldots,f_d)\subset\sing(\F)$ a smooth complete intersection on $\C^n$ and each $f_i\in \oo_{\C^n}$.
Then there are polynomial approximations $\{\F_\kappa\}$ of $\F$ and analytic subsets $W_\kappa\subset\sing(\F_\kappa)$ such that:
\begin{enumerate}
\item[(a)] $\W_{\kappa}$ is a smooth algebraic local complete intersection for all $\kappa$;
\item[(b)] $\displaystyle\lim_{\kappa\to \infty}\W_{\kappa}=\W$ on each precompact $\epsilon$-neighbourhood of $W$.
\end{enumerate}
\end{theorem}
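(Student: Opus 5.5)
Our plan is to truncate Taylor series in the adapted coordinates $u=\psi(z)=(\omega_1,\omega_2)$. In these coordinates $W$ becomes $W_0=\{\omega_1=0\}$, and the components of $Y=\psi_*X$ have the expansions $Y_i=\sum_{|a|\geq m_i}\omega_1^{a}Y_{i,a}(\omega_2)$. The idea is to approximate both the defining functions $f_1,\ldots,f_d$ and the vector field by polynomials. We choose the truncations so that the approximate submanifold stays inside the singular set of the approximate foliation.

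\textbf{Step 1: approximate $W$.} Let $f_{i,\kappa}:=\mathbf J_0^{\kappa}f_i$ be the degree-$\kappa$ Taylor polynomials, and set $W_\kappa:=Z(f_{1,\kappa},\ldots,f_{d,\kappa})$.
\begin{itemize}
\item Since $f_{i,\kappa}\to f_i$ uniformly with all derivatives on a fixed relatively compact $U$, the $d\times d$ minor $Df(0)$ of \eqref{eq:Df} stays invertible for $f_\kappa$ on $\overline U$ once $\kappa$ is large.
\item Hence $\psi_\kappa(z)=(f_{1,\kappa}(z),\ldots,f_{d,\kappa}(z),z_{d+1},\ldots,z_n)$ is a biholomorphism on $U$, by the argument of Lemma~\ref{subvariedad}.
\item Therefore $W_\kappa\cap U$ is a smooth algebraic complete intersection of codimension $d$, which gives (a).
\item Moreover $\psi_\kappa\to\psi$ uniformly on $\overline U$, so $W_\kappa=\psi_\kappa^{-1}(W_0)\to\psi^{-1}(W_0)=W$ in the Hausdorff sense on $U\cap W_\epsilon$. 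This gives (b).
\end{itemize}

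\textbf{Step 2: approximate the foliation.} Write $Y_i=\sum_{|a|=m_i}\omega_1^a g_{i,a}(u)$ as in \eqref{multipli.pu}, with $m_i\geq 1$ because $W\subset\sing(\F)$. Replace each $g_{i,a}$ by its Taylor polynomial $g_{i,a,\kappa}$ of degree $\kappa$. This is a finite sum, since only finitely many $a$ satisfy $|a|=m_i$.
\begin{itemize}
\item Set $Y^{\kappa}_i:=\sum_{|a|=m_i}\omega_1^a g_{i,a,\kappa}(u)$. Each $Y^{\kappa}_i$ is a polynomial in $u$ vanishing on $\{\omega_1=0\}$.
\item Define $X_\kappa:=(\psi_\kappa^{-1})_*Y^\kappa$, expressed through the polynomial coordinates $\omega_1=f_\kappa(z)$. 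Concretely, put $Y^\kappa_i\circ\psi_\kappa$ in place of the coefficients and push forward by the Jacobian of $\psi_\kappa^{-1}$.
\item $X_\kappa$ need not be polynomial in $z$, because of the inverse Jacobian. To fix this, multiply by $\det D\psi_\kappa$, a polynomial that is nonvanishing on $U$. This uses $(D\psi_\kappa)^{-1}=\operatorname{adj}(D\psi_\kappa)/\det D\psi_\kappa$.
\item The rescaled field $\det(D\psi_\kappa)\,X_\kappa$ is polynomial, defines the same foliation $\F_\kappa$ on $U$, and vanishes on $W_\kappa$. Hence $W_\kappa\subset\sing(\F_\kappa)$.
\item Uniform convergence $Y^\kappa\to Y$ and $\psi_\kappa\to\psi$ on compacts gives $\F_\kappa\to\F$ on each precompact neighbourhood.
\end{itemize}

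\textbf{Main obstacle.} The delicate point is compatibility. Convergence of $\F_\kappa$ to $\F$ must hold uniformly on $U\cap W_\epsilon$, while the vanishing along $W_\kappa$ must be exact rather than approximate. This is why we build $Y^\kappa$ in the $\omega_1$-adapted coordinates, where vanishing on $\{\omega_1=0\}$ is automatic from the factor $\omega_1^a$. We then transport by the polynomial map $\psi_\kappa$ instead of by $\psi$.

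If global polynomiality on $\C^n$ rather than on $U$ is required, one can first replace $g_{i,a}$ by polynomials using Runge/Taylor approximation on the polydisc containing $\psi(\overline U)$. The same adjugate trick then applies. If needed, Theorem~\ref{thm:ArtinApprox} can be invoked to keep the truncated $f_\kappa$ and $Y^\kappa$ agreeing with $f$ and $Y$ to arbitrarily high order at $0$. This preserves the multiplicities $m_i$, so that $m_{W_\kappa}(\F_\kappa)=m_W(\F)$.
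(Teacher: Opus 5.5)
Your argument is correct, but it takes a different route from the paper. Like the paper's, it is a local statement near a point of $W$ where one fixed $d\times d$ minor of $Jf$ is invertible; a single adapted chart need not cover an arbitrary precompact $\epsilon$-neighbourhood of $W$.

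The paper truncates the coefficients of $X$ directly in the original $z$-coordinates. It then passes to the adapted coordinates $w=\varphi(z)$, where $W_0=\{w_1=\cdots=w_d=0\}$ is linear, so algebraicity and smoothness of the projective closure come for free. Most of the paper's proof builds the special deformations $Y_{\kappa,t}=Y_\kappa+t\widetilde Y$ and $\widetilde Y_t$, and compares them with an arbitrary deformation $Z_t$ to get minimality. That material feeds the lower bound in Theorem~\ref{theorem2} rather than (a) and (b).

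Your construction instead does three things:
\begin{itemize}
\item it takes $W_\kappa=Z(\mathbf J_0^\kappa f_1,\ldots,\mathbf J_0^\kappa f_d)$;
\item it truncates only the coefficients $g_{i,a}$ in $Y_i=\sum_{|a|=m_i}\omega_1^a g_{i,a}$;
\item it clears the inverse Jacobian with the adjugate.
\end{itemize}
This gives something the paper's direct truncation of $X$ does not visibly guarantee: \emph{exact} containment $W_\kappa\subset\sing(\F_\kappa)$. Indeed, every component of $\operatorname{adj}(D\psi_\kappa)\,(Y^\kappa\circ\psi_\kappa)$ lies in the ideal $(f_{1,\kappa},\ldots,f_{d,\kappa})$, so it vanishes on all of $Z(f_\kappa)$.

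A few points need a sentence each in a write-up:
\begin{itemize}
\item Injectivity of $\psi_\kappa$ needs a quantitative inverse function theorem on a slightly shrunken $U$. The Jacobian condition alone gives only a local biholomorphism.
\item The polynomial field might have a common factor, and saturating it could destroy the vanishing along $W_\kappa$. You should note that for large $\kappa$ no such factor vanishes on a fixed compact subset of $U$. Otherwise a Bishop-type limit would produce a hypersurface inside $\sing(\F)$, which has codimension $\geq 2$.
\item Property (a) holds only for $\kappa$ large, which is harmless after re-indexing.
\item The appeal to Theorem~\ref{thm:ArtinApprox} is unnecessary, since Taylor truncation already agrees with $f$ and $Y$ to order $\kappa$ at $0$.
\end{itemize}
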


\begin{proof}
Let $X:=\{X_\alpha,U_\alpha\}$ be a vector field defining $\F$ in local coordinates $\{U_\alpha,\varphi_\alpha\}$, and write
\begin{equation}\label{reference_field}
X(z):=\sum_{i=1}^n X_i(z)\frac{\partial}{\partial z_i}=(X_1(z),\ldots,X_n(z)).
\end{equation}
Let $f_i\in \oo_{\C^n}$ be germs of holomorphic functions such that
$\W:=Z(f_1,\ldots,f_d)$ is a smooth complete intersection subvariety.
After reordering the coordinates if necessary, we may assume that a $d\times d$ minor of the Jacobian matrix $Jf$ of $f$
is invertible on a neighbourhood $U$ of some point of $W$.
Hence the map $\varphi:\C^n\to\C^n$ given by
\[
z\longmapsto \varphi(z)=(f_1(z),\ldots,f_d(z),z_{d+1},\ldots,z_n)=w
\]
is a local biholomorphism.
The push-forward $Y=\varphi_*X$ is a holomorphic vector field such that
$\W_0:=\varphi(W)=Z(w_1,\ldots,w_d)\subset\sing(Y)$ is the common zero locus of the coordinate functions $w_i$ for $i=1,\ldots,d$, and
\begin{equation}\label{eq:push-forward-field}
Y(w):=\varphi_*X(u)=\sum_{i=1}^n Y_i(w)\frac{\partial}{\partial w_i}.
\end{equation}

Fix a relatively compact open $\epsilon/2$-neighbourhood $V\subset\C^n$ such that the set of embedded points $\mathcal{A}_{\W}$
is contained in $V$.
After shrinking $V$ if necessary, we may assume that $\overline V$ is contained in a polydisc centred at the origin on which all coefficient functions $X_i$
in \eqref{reference_field} admit convergent power series expansions.
Let $X_\kappa$ be the holomorphic vector field obtained by truncating the power series of each coefficient $X_i$ to total degree $\leq \kappa$.
Then $X_\kappa$ is polynomial and $X_\kappa\to X$ uniformly on $\overline V$ as $\kappa\to\infty$.
For a fixed sufficiently small $\epsilon>0$, choose $\kappa$ so large that $X_\kappa$ is $\epsilon/2$-close to $X$ on $\overline V$, and retain this value of $\kappa$.

Let $Y=\varphi_*X$ be the vector field inducing the push-forward one-dimensional foliation $\mathcal{G}$, where
$\varphi(z)=(f_1(z),\ldots,f_d(z),z_{d+1},\ldots,z_n)$.
Keeping this notation, denote by $\mathcal{G}_\kappa$ the foliation induced by the polynomial vector field $Y_\kappa$
associated with $X_\kappa$ via the corresponding truncation $\varphi_\kappa$.
Since $\W_0=\varphi(\W)$ is a smooth complete intersection in $\C^n$, its projective closure, obtained by homogenisation, is a smooth complete intersection in $\mathbb P^n$.

To construct the required polynomial deformation, let $Y_\kappa$ be as in \eqref{eq:push-forward-field} and set
\begin{equation}
Y_{\kappa,t}=Y_\kappa+t\,\widetilde{Y},
\end{equation}
where
\[
\widetilde{Y}=Q_1\frac{\partial}{\partial w_1}+\cdots+Q_n\frac{\partial}{\partial w_n},
\]
and
\[
Q_j(w)=
\begin{cases}
\displaystyle\sum_{|I|=q_j} a_{I,j}\, w_1^{i_1}\cdots w_d^{i_d} & \text{if } 1\leq j\leq d,\\[6pt]
\displaystyle\sum_{|I|=q_j} R_{I,j}(w)\, w_1^{i_1}\cdots w_d^{i_d} & \text{if } d+1\leq j\leq n,
\end{cases}
\]
where $a_{I,j}\in\C$ and each $R_{I,j}$ is an affine linear function.
Impose
\[
q_1=\cdots=q_d=q_{d+1}+1=\cdots=q_n+1=\ell+1,
\]
and choose the coefficients so that $\deg(Q_j)\leq \deg(Y_\kappa)$ and the hyperplane at infinity $H^\infty$ is invariant under $Y_{\kappa,t}$.
After an arbitrarily small perturbation of the coefficients of $\widetilde Y$, we may assume that $Y_{\kappa,t}$ is special along $\W_0$ and that $\mathcal G_{\kappa,t}$ has no embedded points associated with $\W_0$ for $t\neq0$. The deformation then satisfies:
\begin{enumerate}
\item[(i)] $\mathcal{G}_{\kappa,0}=\mathcal{G}_\kappa$;
\item[(ii)] $\deg(\mathcal{G}_{\kappa,t})=\deg(\mathcal{G}_\kappa)$;
\item[(iii)] If $\ell=0$, then $\sing(\mathcal{G}_{\kappa,t})=\{p_1^t,\ldots,p_{s_t}^t\}$ and $W_0$ is $\mathcal{G}_{\kappa,t}$-invariant for every $t\in D_\epsilon\setminus\{0\}$;
\item[(iv)] If $\ell>0$, then $\sing(\mathcal{G}_{\kappa,t})=W_0\cup\{p_1^t,\ldots,p_{s_t}^t\}$ is a disjoint union for every $t\in D_\epsilon\setminus\{0\}$, and $\mathcal{G}_{\kappa,t}$ is special along $W_0$ with $\ell=m_E(\mathcal{G}_{\kappa,t},W_0)$.
\end{enumerate}

The polynomial vector field $Y_{\kappa,t}$ defines a holomorphic foliation on all of $\C^n$. Properties (i) and (ii) follow from the definition; properties (iii) and (iv) follow from a suitable choice of the coefficients $a_{I,j}$ and $R_{I,j}$.

After reducing $\epsilon$ if necessary, choose a polynomial vector field
\[
\alpha=\sum_{i=1}^n \alpha_i(w)\frac{\partial}{\partial w_i},
\qquad
\alpha_i\in \C[w_1,\ldots,w_n],
\qquad
\deg(\alpha)\leq \deg(Y_\kappa),
\]
such that the deformation
\begin{equation}\label{eq:principal-deformation}
\widetilde{Y}_t
=
Y_{\kappa,t}+t\sum_{i=1}^n \alpha_i \frac{\partial}{\partial w_i}
\end{equation}
has only isolated singularities for every $t\ne 0$ on the chosen relatively compact $\epsilon/2$-neighbourhood.
Equivalently, the foliations induced by $\widetilde{Y}_t$ have only isolated singularities on that neighbourhood whenever $t\ne 0$.
For every sufficiently small $t\neq0$, a neighbourhood of each embedded point of $Y$ contains singular points of $\widetilde Y_t$, and their total multiplicity equals the number of embedded points associated with $W_0$. Hence the singularities of $\widetilde Y_t$ converging to $W_0$ form a finite set of constant total multiplicity for $0<|t|<\epsilon/2$.
To prove minimality, let $\{Z_t\}_{t\in D'_\epsilon}$ be any other generic holomorphic deformation of $Y$ whose singularities in the same relatively compact $\epsilon/2$-neighbourhood $V$ are isolated.
Fix a norm $\|\cdot\|$ on $\oo_{\C^n}(V)$.
Since $\|Z_t-Y\|\leq \epsilon/2$ and $\|\widetilde{Y}_t-Y\|\leq \epsilon/2$, it follows that
$\|\widetilde{Y}_t-Z_t\|<\epsilon$ on $V$ for $\epsilon$ sufficiently small.
Hence the number of points of $\sing(Z_t)$, counted with multiplicity, is at least the number of points in $\mathcal A_{\W}$. By continuity, the corresponding isolated singularities converge to $\W_0$.
\end{proof}

Fix a relatively compact $\epsilon$-neighbourhood $V$ of $\W_0$ containing all embedded closed points associated with $\W_0$.
Then
\begin{equation}\label{eq:NW0-muY}
N_{\W_0}
=\mu(Y,\W_0)
=\lim_{t\to0}\sum_{p_i^t\in \Omega_t}\mu(Y_t,p_i^t).
\end{equation}
Since $Y=X_\kappa$, it follows that for every $\kappa$ one has
\begin{equation}\label{eq:Nkappa-mu}
N^\kappa_{\W_0}
=\mu(X_\kappa,\W_0)
=\lim_{t\to 0}\sum_{p_i^t\in A_{W_0}} \mu\bigl((X_\kappa)_t,p_i^t\bigr)
\geq N_{\W_0}.
\end{equation}

\begin{definition}\label{def:totally-simple}
A foliation $\fol$ is said to be \emph{totally simple} along a positive-dimensional component
$\W\subset \sing(\fol)$ if, for every point $q\in\W$, there exists a local holomorphic vector field $X$ representing $\fol$ and a $d\times d$ minor of the Jacobian matrix $JX(q)$ all of whose eigenvalues are non-zero.
\end{definition}

\begin{theorem}\label{thm:totally-simple-perturbation}
If $\fol$ is totally simple along $\W$, then there exists a holomorphic perturbation $\{\fol_t\}$ of $\fol$
such that $\fol_0=\fol$, $\lim_{t\to0}\fol_t=\fol$, and
$
\Omega_t:=\{\,p_t\in \sing(\fol_t)\mid p_t\to \W \text{ as } t\to0\,\}=\varnothing.
$
Consequently,
\begin{equation}\label{eq:totally-simple-mu-zero}
\lim_{t\to 0}\mu(\fol_t,\W)=0.
\end{equation}
\end{theorem}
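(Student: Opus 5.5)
The plan is to build an explicit perturbation in adapted coordinates in which the linear part normal to $\W$ stays invertible. Once that is done, the perturbed field has no zeros in a fixed neighbourhood of $\W$, so no singular points can converge to $\W$.

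First, work locally near a point $q\in\W$. Using the biholomorphism $\psi(z)=(f_1(z),\ldots,f_d(z),z_{d+1},\ldots,z_n)=(x,y)$ from Lemma~\ref{subvariedad}, we may assume $\W=\{x=0\}$. Write the generating vector field as
\[
X=\sum_{i\le d}A_i\frac{\partial}{\partial x_i}+\sum_{j>d}B_j\frac{\partial}{\partial y_j},
\]
with all $A_i$ and $B_j$ vanishing on $\{x=0\}$. On $\W$ the Jacobian $JX$ then has vanishing $y$-columns, so the only $d\times d$ minors that can be nonzero lie in the $x$-columns. The total simplicity hypothesis (Definition~\ref{def:totally-simple}) therefore says that some $d\times d$ block $M(y)$ of $\partial(\text{components})/\partial x$ at $(0,y)$ has non-zero eigenvalues, i.e.\ is invertible.

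After a linear reordering of the components, I will arrange that this block is $\partial A/\partial x(0,y)$. The cleanest way is to replace the generator by $PX$ for an invertible constant matrix $P$. This changes the field but not its zero set, and zero sets are all that matter for counting the set $\Omega_t$. Shrinking the neighbourhood, the implicit function theorem applied to $A(x,y)=0$ shows that, near $\W$, the solution set is exactly the graph $x=0$.

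Now define the perturbation
\[
X_t=X+t\,\frac{\partial}{\partial y_1}.
\]
A zero of $X_t$ near $\W$ must satisfy $A(x,y)=0$, which forces $x=0$. But on $\{x=0\}$ we have $B_1(0,y)=0$, so the $\partial/\partial y_1$-component of $X_t$ equals $t\neq0$. Hence $X_t$ has no zeros in a uniform tubular neighbourhood $U$ of $\W\cap V$, and $\Omega_t=\varnothing$, giving \eqref{eq:totally-simple-mu-zero}. The general case is handled by covering a precompact neighbourhood of $\W$ by such charts and using a compactness argument to obtain a uniform neighbourhood, independent of $t$.

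The main obstacle is globalizing this construction, for two reasons. First, the local generators and the choices of ``tangential direction'' $\partial/\partial y_1$ must patch together into a single perturbation $\fol_t$. Second, the perturbation must not create zeros elsewhere that converge to $\W$. I would take the perturbing term to be a holomorphic section $v$ of $T\fol$-twisted $TM$ that is nowhere tangent-vanishing along $\W$, i.e.\ $v|_\W$ a nowhere-zero tangent field to $\W$. In the local germ setting this exists trivially, since $\W$ is a Stein contractible germ and $T\W$ is trivial near $0$. If one insists on a global section in a projective setting, I would instead use the polynomial approximations of Theorem~\ref{thm:polynomial-approximation} and accept vanishing only at finitely many points of $\W$, which would then need a separate local check.

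Finally, the isolated singularities $p_1,\ldots,p_r$ stay away from $\W$ and persist by continuity. If needed, a further generic term $t^2\alpha$, as in \eqref{eq:principal-deformation}, makes all singularities of $\fol_t$ isolated without affecting the zero-free neighbourhood $U$, because the nonvanishing is an open condition.
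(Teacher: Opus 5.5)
Your proposal is correct and is essentially the paper's proof: in the coordinates $w=\varphi(z)$ with $\W_0=\{w_1=\cdots=w_d=0\}$, invertibility of $(a_{i,j})_{1\le i,j\le d}$ forces any zero of $Y_t$ onto $\W_0$, where the constant tangential term $t(0,\ldots,0,\epsilon_{d+1},\ldots,\epsilon_n)$ cannot vanish, which is exactly your implicit-function step followed by $X+t\,\partial/\partial y_1$. One point to tighten: if your $P$ genuinely moves a tangential component into the invertible block (a case the paper's ``reordering coordinates'' tacitly excludes), then the perturbation of $\fol$ you actually use is $X+tP^{-1}\partial/\partial y_1$, which may be normal to $\W$, so your closing condition ``$v|_{\W}$ a nowhere-zero tangent field'' is not the right one (the relevant condition is $v(q)\notin\operatorname{Im}JX(q)$ for all $q\in\W$); for instance $X=x_1\partial_{x_1}+x_1x_2\partial_{x_2}+x_2\partial_y$ has $\sing(X)=\{x_1=x_2=0\}$ and an invertible $2\times2$ block in rows $1,3$, yet $X+t\partial_y$ vanishes on the line $\{x_1=0,\ x_2=-t\}$.
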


\begin{proof}
Let $V$ be a precompact $\epsilon$-neighbourhood of $\W$.
Let $f=(f_1,\ldots,f_d)$ be as above, so that $\W=Z(f_1,\ldots,f_d)$, and let
$
\varphi:=(f_1,\ldots,f_d,z_{d+1},\ldots,z_n)
$
be a local change of coordinates on $\C^n$.
Let $Y=\varphi_*X$ be the push-forward of a local holomorphic vector field $X$ representing $\fol$, so that
\[
Y(w)=\sum_{i=1}^n Y_i(w)\,\frac{\partial}{\partial w_i}.
\]
Denote by $\W_0:=\varphi(\W)$ the image of $\W$; then $\W_0$ is locally given by $\{w_1=\cdots=w_d=0\}$.
Each component $Y_i$ has an expansion
\begin{equation}\label{eq:Yi-expansion}
Y_i(w)=\sum_{|\alpha|\geq m_i} w_1^{\alpha_1}\cdots w_d^{\alpha_d}\,Y_{i,\alpha}(w),
\qquad
\alpha=(\alpha_1,\ldots,\alpha_d),\ \ |\alpha|=\alpha_1+\cdots+\alpha_d,
\end{equation}
for suitable holomorphic functions $Y_{i,\alpha}$.
By total simplicity, the Jacobian matrix $JY(w)$ has, along $\W_0\cap \widetilde V$ (where $\widetilde V:=\varphi(V)$),
a $d\times d$ submatrix whose determinant is nowhere vanishing. After reordering coordinates if necessary,
we may assume that this is the submatrix corresponding to the first $d$ components $(Y_1,\ldots,Y_d)$
and the variables $(w_1,\ldots,w_d)$. Hence the linear part of $(Y_1,\ldots,Y_d)$ in the variables $(w_1,\ldots,w_d)$ is invertible along $\W_0\cap\widetilde V$, and $m_{W_0}(Y)=1$.
Equivalently, on $\widetilde V$ we may write
\begin{equation}\label{eq:Yi-linear}
Y_i(w)=\sum_{j=1}^d a_{i,j}(w)\,w_j,
\qquad i=1,\ldots,n,
\end{equation}
for suitable holomorphic functions $a_{i,j}$, with the $d\times d$ matrix $(a_{i,j})_{1\leq i,j\leq d}$
invertible along $\W_0\cap\widetilde V$.
Choose constants $\epsilon_{d+1},\ldots,\epsilon_n\in\C^*$ and consider the holomorphic deformation
$\{Y_t\}_{t\in D_\epsilon}$ defined by
\begin{equation}\label{eq:Yt-translation}
Y_t
=
Y+t\Bigl(0,\ldots,0,\epsilon_{d+1},\ldots,\epsilon_n\Bigr).
\end{equation}
For every sufficiently small $t\neq0$, the vector field $Y_t$ has no zeros on $\widetilde V$. Suppose, to the contrary, that $Y_t(w)=0$. Its first $d$ components then vanish:
$Y_1(w)=\cdots=Y_d(w)=0$. By \eqref{eq:Yi-linear} and the invertibility of $(a_{i,j})_{1\leq i,j\leq d}$ on
$\widetilde V\cap\W_0$, this forces $w_1=\cdots=w_d=0$, hence $w\in\W_0$.
But on $\W_0$ the last $n-d$ components of $Y_t$ equal
$
Y_{d+1}(w)+t\epsilon_{d+1},\ \ldots,\ Y_n(w)+t\epsilon_n,
$
and since $Y|_{\W_0}\equiv 0$ while $\epsilon_{d+1},\ldots,\epsilon_n\ne 0$, none of these can vanish for $t\ne0$.
This is impossible. Set $X_t:=\varphi^*Y_t$. Then $X_t\to X$ as $t\to0$, and $X_t$ has no zeros on $V$ for every sufficiently small $t\neq0$. Thus $\Omega_t=\varnothing$ and $\mu(\fol_t,\W)=0$, which proves \eqref{eq:totally-simple-mu-zero}.
\end{proof}

\subsection{Gradient fields and morsifications}\label{subsec:gradient-fields}

Let
$
f:(\mathbb C^n,0)\longrightarrow(\mathbb C,0)
$
be a holomorphic germ and set $X=\nabla f$. The singular locus of $X$ is the critical locus of $f$, and the Poincar\'e--Hopf index of the gradient at an isolated critical point is the corresponding Milnor number. The local theorem therefore applies directly to deformations of $f$ whose nearby critical loci are finite.

\begin{proof}[Proof of Corollary~\ref{coro:gradient-introduction}]
Set $X_t=\nabla f_t$. Then
$
\sing(X_t)=\operatorname{Crit}(f_t).
$
At every isolated critical point $p$ of $f_t$, one has
\[
\mathcal I_p(X_t)
=
\dim_{\mathbb C}
\frac{\mathcal O_{\mathbb C^n,p}}
{\left(
\frac{\partial f_t}{\partial z_1},\ldots,
\frac{\partial f_t}{\partial z_n}
\right)}
=
\mu(f_t,p);
\]
see \cite{Milnor}. The first assertion follows from Theorem~\ref{theorem2}. If $f_t$ is a morsification, every critical point of $f_t$ is non-degenerate and hence has Milnor number one. The second assertion follows.
\end{proof}

\begin{remark}
The deformation of a positive-dimensional critical locus may also be described by L\^e cycles, L\^e numbers and polar multiplicities \cite{LeTeissier81,MasseyLeCycles}. Gaffney's multiplicity-polar theorem controls the behaviour of these invariants in families \cite{Gaffney92,Gaffney93}. Corollary~\ref{coro:gradient-introduction} isolates the lower bound furnished by the embedded contribution associated with $W$.
\end{remark}

\section{Examples}\label{sec:examples}

The following examples exhibit the phenomena specific to non-isolated singular sets. They show how isolated singularities may separate from a positive-dimensional component and return to it, how the total index may be redistributed between a fixed affine neighbourhood and the hyperplane at infinity, and how the contribution along the component may vanish in the totally simple case. They also show that the lower bounds proved above are sharp.

In the first example, $\W=\{z_1=z_2=0\}\subset\C^3$. Two perturbations are considered. The resulting counts depend on the common roots of $\alpha_3$ and the relevant characteristic polynomials, and determine the possible values of $\mu(X_t,\W)$ and the contribution $N(\fol,A_{\W_0})$.

\begin{example}
Consider the following family of polynomial vector fields on $\C^3$:
\[
X=\sum_{i=0}^{m}a_iz_1^{k-i}z_2^i\frac{\partial}{\partial z_1}
+\sum_{i=0}^{m}b_iz_1^{k-i}z_2^i\frac{\partial}{\partial z_2}
+\sum_{i=0}^{m-1}c_i(z)z_1^{m-i-1}z_2^i\frac{\partial}{\partial z_3},
\]
where $m\geq 2$, $c_i(z)=\alpha_{i,0}+\sum_{j=1}^{3}\alpha_{i,j}z_j$ is an affine linear function for
$i=0,\ldots,m-1$, and $a_i,b_i\in\C$. Set
\[
\alpha_j(\lambda)=\sum_{i=0}^{m-1}\alpha_{i,j}\lambda^i \qquad (j=0,1,2,3).
\]
Assume that
$
a(\lambda)=\sum_{i=0}^{m}a_i\lambda^i
 $ and $
b(\lambda)=\sum_{i=0}^{m}b_i\lambda^i
$
have no common roots. Thus, $\deg(a)=\deg(b)=\deg(\alpha_j)+1=m$.
Assume further that
$
\sing(X)=\W=\{z\in\C^3\mid z_1=z_2=0\}.
$

Let $\fol$ be the one-dimensional holomorphic foliation on $\mathbb{P}^3$ induced, on the affine chart $\C^3$,
by the vector field $X$. Thus, the singular set of $\fol$ consists of
$
\W_0=\{[x]\in\mathbb{P}^3\mid x_0=x_1=0\},
$
(with $z_i=x_{i-1}/x_3$ for $i=1,2,3$) together with $m+1$ isolated closed points, counted with multiplicity.
Namely,
\[
p_j=[u_{1j}:\beta_j u_{1j}:1:0]\in\sing(\fol),
\]
where $\beta_j$ is a root of the equation $b(\lambda)-\lambda a(\lambda)=0$ and
\[
u_{1j}=\frac{a(\beta_j)-\alpha_3(\beta_j)}{\alpha_1(\beta_j)+\beta_j\alpha_2(\beta_j)}.
\]
Thus, if $u_{1j}=0$ for some $j$, then there is at least one embedded point of $\sing(\fol)$ associated with $\W_0$.
Therefore, the number of embedded singular points of $\sing(\fol)$ associated with $\W_0$ satisfies
$N(\fol,A_{\W_0})\leq m-1$, since $a(\lambda)$ and $\alpha_3(\lambda)$ have at most $m-1$ common roots.
If $\alpha_3\equiv0$, suitable coefficients give $N(\fol,A_{\W_0})=m$.
To determine the Milnor number $\mu(X_t,\W)$, consider the holomorphic deformation $X_t$ of $X$ on $\C^3$ given by
\begin{equation}\label{genpert}
X_t=X-t\bigg(\epsilon_1\frac{\partial}{\partial z_1}
+\epsilon_2\frac{\partial}{\partial z_2}
+\epsilon_3\frac{\partial}{\partial z_3}\bigg),
\end{equation}
where each $\epsilon_i\ne 0$ is a generic complex number. Thus,
\[
X_t^{-1}(0)=\{p_{ij}^t=(z_{1ij}^t,\lambda_j z_{1ij}^t,z_{3ij}^t)\in \C^3,\ 1\leq i,j\leq m\},
\]
where $\lambda_j$ is a root of the equation $\epsilon_1b(\lambda)-\epsilon_2a(\lambda)=0$,
$z_{1ij}^t$ is an $m$-th root of $t\epsilon_1/a(\lambda_j)$, and
\[
z_{3ij}^t=-\frac{\alpha_0(\lambda_j)}{\alpha_3(\lambda_j)}
+z_{1ij}^t\frac{\epsilon_3a(\lambda_j)-\epsilon_1(\alpha_1(\lambda_j)+\lambda_j\alpha_2(\lambda_j))}
{\epsilon_1\alpha_3(\lambda_j)},
\]
where we necessarily assume $\alpha_3(\lambda_j)\ne 0$. Under this condition,
\[
\lim_{t\to 0}z_{1ij}^t=0,
\qquad
\lim_{t\to 0}z_{3ij}^t=-\frac{\alpha_0(\lambda_j)}{\alpha_3(\lambda_j)}.
\]
Hence,
\begin{equation}\label{pertX}
\mu(X_t,\W)= \lim_{t\to0}\sum_{p_{ij}^t\in \mathcal{A}_{\W}}\mu(X_t,p_{ij}^t)=m^2
\end{equation}
for a generic perturbation $X_t$ as in \eqref{genpert}.
However, if there is exactly one $t_j$ such that \[\epsilon_1 b(t_j)-\epsilon_2a(t_j)=0,\] where $t_j$ is also a root of $\alpha_3$,
then the limit in \eqref{pertX} equals $m^2-m$. In the extreme case, when the equations
$\epsilon_1 b(\lambda)-\epsilon_2a(\lambda)=0$ and $\alpha_3(\lambda)=0$ have $m-1$ common roots,
the minimum value of $\mu(X_t,\W)$ is $m^2-(m-1)m=m$, provided that $\alpha_3\not\equiv 0$.
If $\alpha_3\equiv 0$, suitable choices give the minimum value $\mu(X_t,\W)=0$.
When $\alpha_3\equiv0$, consider instead the holomorphic perturbations
\begin{equation}\label{genpert2}
Y_t=X-t\bigg(\epsilon_1\frac{\partial}{\partial z_1}
+\epsilon_2\frac{\partial}{\partial z_2}
+(z_3^m+\epsilon_3)\frac{\partial}{\partial z_3}\bigg).
\end{equation}
In this case, for $t\ne 0$ the singular set of $Y_t$ consists generically of $m^3$ isolated points, counted with multiplicity.
Indeed,
\[
\sing(Y_t)=\{ q^t_{ijk}=(z^t_{1ij},\lambda_j z^t_{1ij},z^t_{3ijk})\in\C^3\},
\]
where $\lambda_j$ is a root of $\epsilon_1b(\lambda)-\epsilon_2a(\lambda)=0$,
$z_{1ij}^t$ is an $m$-th root of $t\epsilon_1/a(\lambda_j)$,
and $z^t_{3ijk}$ is a root (in the variable $z_3$) of the polynomial equation
\[
(z^t_{1ij})^m(\alpha_1(\lambda_j)+\lambda_j\alpha_2(\lambda_j))
+(z^t_{1ij})^{m-1}\big(\alpha_0(\lambda_j)+\alpha_3(\lambda_j)z_3\big)
-t(z_3^m+\epsilon_3)=0.
\]
After the change of variable $w=1/z_3$, this equation reduces to an equation in $w$; letting $t\to 0$ one obtains
\[
w^m+\frac{\alpha_3(\lambda_j)}{\alpha_0(\lambda_j)}w^{m-1}=0.
\]
Hence the limiting values of $w$ are $0$ and $-\alpha_3(\lambda_j)/\alpha_0(\lambda_j)$, with multiplicities $m-1$ and $1$, respectively.
Therefore, generically, $m^2$ isolated singularities of $Y_t$ converge to the curve $\W$ and $m^3-m^2$ of them converge
to the point at infinity
$
H_3\cap\W_0=p=[0:0:1:0],
$
where $H_3\subset\mathbb{P}^3$ is the hyperplane defined by $\xi_3=0$.
Moreover, for each $\lambda_j$ such that $\alpha_3(\lambda_j)=0$, there are $m$ additional singular points converging to $p$,
counted with multiplicity. Let $\beta$ be the maximal number of common roots of $\alpha_3(\lambda)=0$ and
$\epsilon_1 b(\lambda)-\epsilon_2a(\lambda)=0$ as $(\epsilon_1,\epsilon_2)$ vary. Thus, $\beta\leq m$.
Therefore, the minimum value for the limit \eqref{pertX} is  $m^2-\beta m.$

Let $\fol_t$ be the one-dimensional holomorphic foliation on $\mathbb{P}^3$ induced by $Y_t$.
On the hyperplane at infinity $H_3=\mathbb{P}^3\setminus\C^3=\mathbb{P}^2$, the foliation $\fol_t$ is described by the vector field
\[
\widetilde{Y_t}
=\bigl(t\alpha_2u_1-4u_1^3-tu_2^3+u_1g_t(u_1,u_2)\bigr)\frac{\partial}{\partial u_1}
+\bigl(t\alpha_2u_2-3u_1^3-2tu_2^3+u_2g_t(u_1,u_2)\bigr)\frac{\partial}{\partial u_2}.
\]
where
\[
g(u_1,u_2)=-t+\sum_{i=1}^{m-1}(\alpha_{i,1}u_1+\alpha_{i,2}u_2+\alpha_{i,3})u_1^{m-1-i}u_2^i,
\qquad
u_i=\xi_{i-1}/\xi_{2}\ \text{ for } i=1,2.
\]
A direct computation gives $\mu(\fol_t|_{H_3},0)\geq m^2$; in fact,
$\mu(\fol_t|_{H_3},0)=m^2+N(\fol,A_{\W_0})$. Therefore, $m^2+N(\fol,A_{\W_0})$ singular points in $H_3$ converge to $p$, and
\[
\mu(\fol,p):=\lim_{t\to0}\sum_{\lim_t q^t_{ijk}=p}\mu(\fol_t,q^t_{ijk})
=m^3-m^2+\beta m+m^2+N(\fol,A_{\W_0}).
\]
Hence $\mu(\fol,p)\leq m^3+\beta m+N(\fol,A_{\W_0})$.
By Theorem~\ref{theorem1}, we have
\[
\mu(\fol,\W_0)=-\nu(\fol,\W_0,\varphi_a)+N(\fol,A_{\W_0}),
\]
where $-\nu(\fol,\W_0,\varphi_a)=m^3+m^2$ since $\deg(\fol)=m$, $\chi(\W_0)=2$, $\deg(\W_0)=1$,
and $\ell=m_{\E_1}(\pi^*\fol)=m-1$.
Therefore,
\[
\mu(Y_t,\W)
=-\nu(\fol,\W_0,\varphi_a)+N(\fol,A_{\W_0})
-\lim_{t\to0}\sum_{\lim q^t_{ijk}=p}\mu(\fol_t,q^t_{ijk}).
\]
Since $\mu(Y_t,\W)\geq 0$, it follows that
\[
\mu(\fol,p)\leq -\nu(\fol,\W_0,\varphi_a)+N(\fol,A_{\W_0})
= m^3+m^2+m-1=MS(\fol,p)-1,
\]
where $MS(\fol,p)$ denotes Soares' bound. This upper value for $\mu(\fol,p)$ (and hence the minimum value for $\mu(X_t,\W)$)
is achieved, for instance, by the vector field
\[
X(z)=z_2^m\frac{\partial}{\partial z_1}+z_2^m\frac{\partial}{\partial z_2}+z_1^m\frac{\partial}{\partial z_3}.
\]
\end{example}

The second example has a curve component $\W$, an embedded singular point on $\W$, and further isolated singularities. A translation perturbation leaves a curve component in the projective singular locus. A second perturbation, accompanied by a deformation of the curve, has zero-dimensional singular locus; the limiting positions of its singular points determine the contribution at infinity.

\begin{example}
Let $X$ be the holomorphic vector field on $\C^3$ defined by
\begin{equation*}
\begin{array}{ll}
X=&\bigl(3z_1(z_2-z_1^2)+z_3-z_1^3\bigr)\frac{\partial}{\partial z_1}
+\bigl((z_1+5)(z_2-z_1^2)+2(z_3-z_1^3)\bigr)\frac{\partial}{\partial z_2}+\\
&+\bigl(z_2(z_2-z_1^2)+z_3-z_1^3\bigr)\frac{\partial}{\partial z_3}.
\end{array}
\end{equation*}
The singular set of $X$ contains the curve $\W\subset\C^3$ defined by
$
z_2-z_1^2=z_3-z_1^3=0
$
and the isolated point $A=(1,3,-5)$. There is also an embedded singular point
$P=(1,1,1)\in\sing(X)\cap\W$. Indeed, from the first two components of $X$ we obtain
$5(z_1-1)(z_2-z_1^2)=0$, and setting $z_1=1$ yields the points $A$ and $P$.
Let $\fol$ be the holomorphic foliation on $\mathbb{P}^3$ induced by $X$ on $\C^3$. Then
\[
\sing(\fol)=\W_0\cup \W_1\cup[1:1:1:1]\cup[1:3:-5:1]\cup[16:12:7:0],
\]
where $\W_0$ is the twisted cubic defined by
\[
\xi_1\xi_3-\xi_0^2=\xi_2\xi_3^2-\xi_0^3=\xi_0\xi_3-\xi_1\xi_2=0,
\]
while $\W_1$ is defined by $\xi_0=\xi_3=0$.
To compute the Milnor number $\mu(\fol,\W_0)$, we first consider the usual holomorphic deformation $X_t$ of $X$ given by
\begin{equation}\label{perb1}
X_t= X-t\bigg(\epsilon_1\frac{\partial}{\partial z_1}
+\epsilon_2\frac{\partial}{\partial z_2}
+\epsilon_3\frac{\partial}{\partial z_3}\bigg),
\end{equation}
where the $\epsilon_i$ are generic complex numbers. To determine $\sing(X_t)$, we use the first two components of $X_t$ and obtain
\[
5(z_1-1)\bigl(z_2-z_1^2\bigr)=(2\epsilon_1-\epsilon_2)t.
\]
There are two cases: $2\epsilon_1\neq\epsilon_2$ and $2\epsilon_1=\epsilon_2$.
If $2\epsilon_1\ne\epsilon_2$, then $\sing(X_t)$ contains three points, with two singular points converging to $\W$
and the third point converging to $A$.
If $2\epsilon_1=\epsilon_2$, then $\sing(X_t)$ contains only two points, with one point converging to $A$
and the other singular point converging to $P$. Therefore,
\[
\mu(X_t,\W)=\lim_{t\to0}\sum_{p_i^t\in\mathcal{A}_{\W}}\mu(X_t,p_i^t)\geq 1,
\]
and the minimum value is attained when $2\epsilon_1=\epsilon_2$.
However, $\W_1\subset\sing(\fol_t)$ for all $t\ne0$, where $\fol_t$ is the foliation on $\mathbb{P}^3$ induced by $X_t$.
Both $X$ and the curve $\W_0$ are therefore deformed. Let $\W_t$ be defined by
\[
\xi_1\xi_3-\xi_0^2=\xi_2\xi_3^2-\xi_0^3-t\xi_1^3=0.
\]
Thus, $\W_t$ is a smooth complete intersection for all $t\ne 0$.
Consider the holomorphic deformation
\begin{equation*}
\begin{array}{ll}
Y_t&=\bigl(3z_1f_1^t(z)+f_2^t(z)-t\epsilon_1\bigr)\frac{\partial}{\partial z_1}
+\bigl((z_1+5)f_1^t(z)+2f_2^t(z)-t\epsilon_2\bigr)\frac{\partial}{\partial z_2}\\
&+\bigl(z_2f_1^t(z)+f_2^t(z)-t(\epsilon_3+\alpha_0 z_1^3+\alpha_1 z_2^3+\alpha_2 z_3^3)\bigr)\frac{\partial}{\partial z_3},
\end{array}
\end{equation*}
where $f_1^t(z)=z_2-z_1^2$, $f_2^t(z)=z_3-z_1^3-tz_2^3$, and $\epsilon_i$ and $\alpha_i$ are nonzero generic complex numbers.
Let $\G_t$ be the holomorphic foliation on $\mathbb{P}^3$ induced by $Y_t$.
For a general choice of the coefficients and $t\neq0$, the singular set of $Y_t$ consists of $27$ isolated points, counted with multiplicity.
Let $z_i^t=(z_{1i}^t,z_{2i}^t,z_{3i}^t)\in\sing(Y_t)$. From the first two components of $Y_t$ we obtain again
\[
5(z_{1i}^t-1)\bigl(z_{2i}^t-(z_{1i}^t)^2\bigr)=(2\epsilon_1-\epsilon_2)t,
\]
which implies
\[
\lim_{t\to0}z_{1i}^t=1,
\quad\text{or}\quad
\lim_{t\to0}\bigl(z_{2i}^t-(z_{1i}^t)^2\bigr)=0.
\]
In the first case, $\lim_{t\to0}z_{2i}^t$ is either $1$ or $3$. In the second, $\lim_{t\to0}z_i^t\in\W_t$.
First, assume that \[2\epsilon_1-\epsilon_2=\alpha_0=\alpha_1=\alpha_2=0.\]
Then the singular set of $Y_t$ consists of two isolated points, one converging to $A$ and the other converging to $P$.
Thus,
\[
\mu(Y_t,\W)=\lim_{t\to0}\sum_{p_i^t\in\mathcal{A}_{\W}}\mu(Y_t,p_i^t)\geq 1.
\]
Suppose now that $2\epsilon_1\neq\epsilon_2$. Then
\[
z_{3i}^t=t\bigg(\frac{(3\epsilon_2-\epsilon_1)z_{1i}^t-5\epsilon_1}{5(z_{1i}^t-1)}\bigg).
\]
From the third component of $Y_t$ we obtain a polynomial of degree $27$ determining the variable $z_{1i}^t$.
Let
\[
\nu_{1i}^0=\lim_{t\to0}\frac{1}{z_{1i}^t},
\]
which satisfies
\[
\nu^9(\nu-1)^8\bigg[5(\epsilon_3-\epsilon_1)\nu^{10}
+(3\epsilon_3-\epsilon_1-5\epsilon_3)\nu^9
+(2\epsilon_2-\epsilon_1)\nu^8
+5(\nu-1)\sum_{i=0}^{2}\alpha_{i}\nu^{6-2i}\bigg]=0.
\]
Consequently, $\sing(Y_t)$ consists of $27$ isolated points, of which $26$ converge to $\W_0$, since $\mu(X,A)=1$.
Among the eight points $z_i^t$ such that $\lim_{t\to0}z_{1i}^t=1$, only one converges to $A$, and the other seven converge to $P=(1,1,1)\in\W$.
Moreover, all points $z_i^t\in\sing(Y_t)$ such that $\lim_{t\to0}u_{1i}^t=0$ converge to
$(\mathbb{P}^3\setminus\C^3)\cap \W_0=[0:0:1:0]=p$.
To determine $\mu(\fol,\W_0)$, restrict $\G_t$ to the hyperplane at infinity $H_3=\mathbb P^3\setminus\C^3$.
On $H_3$, the foliation $\G_t$ is described by
\[
\widetilde{Y_t}
=\bigl(t\alpha_2u_1-4u_1^3-tu_2^3+u_1g_t(u_1,u_2)\bigr)\frac{\partial}{\partial u_1}
+\bigl(t\alpha_2u_2-3u_1^3-2tu_2^3+u_2g_t(u_1,u_2)\bigr)\frac{\partial}{\partial u_2}.
\]
where $g_t(u_1,u_2)=u_1^2u_2+(1+t\alpha_1)u_1^3+t(1+\alpha_2)u_2^3$ and $u_i=\xi_{i-1}/\xi_{2}$ for $i=1,2$.
A direct calculation shows that $\sing(\widetilde{Y_t})$ consists of $13$ isolated points, counted with multiplicity.
To locate them, we write $u_{1i}^t=\lambda_t u_{2i}^t$ for $u_i^t=(u_{1i}^t,u_{2i}^t)\in \sing(\widetilde{Y_t})$.
Then
\[
\begin{cases}
3\lambda_t^4-4\lambda_t^3+2t\lambda_t-t=0,\\[2pt]
u_{2i}^t\Bigl[t\alpha_2-\bigl(3\lambda_t^3+2t\bigr)(u_{2i}^t)^2
+\bigl(\lambda_t^2+(1+t\alpha_1)\lambda_t^3+t(1+\alpha_2)\bigr)(u_{2i}^t)^3\Bigr]=0.
\end{cases}
\]
For every $\lambda_i^t$ solving the first equation ($i=1,2,3,4$), there are three isolated singular points different from the origin $O=(0,0)\in H_3$.
Letting $t\to0$ in the first equation yields
\[
3\lambda^4-4\lambda^3=0
\quad\Longleftrightarrow\quad
\lambda^3(3\lambda-4)=0.
\]
Hence, counting multiplicities, $\lambda=0$ is a triple root and $\lambda=4/3$ is a simple root.
The parametrisation $u_1=\lambda u_2$ covers only the singular points with $u_2\neq0$. The origin $O=(0,0)$ is considered separately; it belongs to $\sing(\widetilde{Y_t})$ for every $t$.
For $\lambda=4/3$, the second equation at $t=0$ becomes
\[
u_2\Big(-3\lambda^3 u_2^2+(\lambda^2+\lambda^3)u_2^3\Big)=0,
\]
so besides $u_2=0$ there is a nonzero solution
\[
u_2=\frac{3\lambda^3}{\lambda^2+\lambda^3}=\frac{3\lambda}{1+\lambda},
\qquad\text{and hence}\qquad
(u_1,u_2)=\Big(\frac{4}{3}\cdot\frac{12}{7},\frac{12}{7}\Big)=\Big(\frac{16}{7},\frac{12}{7}\Big).
\]
Thus at least one singular point converges to $(16/7,12/7)\in H_3$, rather than to $O$. The remaining singularities converge to $O$, with multiplicity, and give the stated local Milnor number there.
\end{example}

The final example concerns the fixed component $\W=\{z_1=z_2=0\}$. One class of perturbations has no isolated singularities near $\W$, so that $\mu(X_t,\W)=0$. Polynomial truncations of the transcendental coefficients, however, give projective foliations whose local Milnor number at $p=[0:0:1:0]$ is arbitrarily large. Consequently, there is no uniform upper bound for the contribution at infinity within this class.

\begin{example}
Consider the vector field on $\C^3$
\begin{equation}
X= \big( z_1\cos(z_3) + z_2\sin(z_3)\big)\frac{\partial}{\partial z_1}
+ \big( -z_1\sin(z_3) + z_2\cos( z_3)\big)\frac{\partial}{\partial z_2}
+ z_1P_m(z_3)\frac{\partial}{\partial z_3},
\end{equation}
where $P_m$ is a polynomial of degree $m$ with distinct roots $\gamma_i$ for $i=1,\ldots,m$.
Its singular set is $\W=\{z_1=z_2=0\}$. The component $\W$ is totally simple, since the Jacobian matrix $JX$ has an invertible $2\times2$ minor along $\W$.
Consider the small perturbations $X_t$ of $X$ given by
\begin{equation}
X_t = X-t\sum_{i=1}^{3}\big( a_{i0}+a_{i1}z_1+a_{i2}z_2\big)\frac{\partial}{\partial z_i},
\end{equation}
where the $a_{ij}$ are complex numbers. For $t\ne 0$, the singular set of $X_t$ is determined by solving the system
\begin{equation}\label{system1}
\left\{
\begin{array}{lcl}
z_1(\cos(z_3)-ta_{11})+z_2(\sin(z_3)-ta_{12})&=&ta_{10},\\
-z_1(\sin(z_3)+ta_{21})+z_2(\cos(z_3)-ta_{22})&=&ta_{20}
\end{array}
\right.
\end{equation}
for $(z_1,z_2)$ and then using the third component of $X_t$ to determine the $z_3$-coordinates of the isolated singularities.
For a general choice of the coefficients and $t\neq0$, the set $\sing(X_t)$ is an infinite discrete subset of $\mathcal A_{\W}$.
However, if $a_{30}\ne0$ and $a_{ij}=0$ for $i=1,2$, then $\sing(X_t)=\emptyset$.
Indeed, under this condition, if $p_t=(z_1^t,z_2^t,z_3^t)\in \sing(X_t)$, then $z_1^t=z_2^t=0$,
but the third component of $X_t$ cannot vanish. Hence
\[
\mu(X_t,\W) = \lim_{t\to0}\sum_{p_i^t\in\mathcal{A}_{\W}}\mu(X_t,p_i^t) \geq 0,
\]
and equality holds when $a_{30}\neq0$ and $a_{ij}=0$ for $i=1,2$. Consider next the polynomial deformation
\[
X_k^t = X_k - t\sum_{i=1}^{3}\big( a_{i0}+a_{i1}z_1+a_{i2}z_2\big)\frac{\partial}{\partial z_i},
\]
where
\[
X_k = \big( z_1f_{k}(z_3)+z_2g_k(z_3)\big)\frac{\partial}{\partial z_1}
+\big(-z_1 g_k(z_3)+f_k(z_3) z_2\big)\frac{\partial}{\partial z_2}
+z_1P_m(z_3)\frac{\partial}{\partial z_3},
\]
with
\[
f_k(z_3)=\sum_{i=0}^{k}(-1)^i\frac{z_3^{2i}}{(2i)!},
\qquad
g_k(z_3)=\sum_{i=0}^{k}(-1)^i\frac{z_3^{2i+1}}{(2i+1)!}.
\]
Let $\fol_k^t$ be the one-dimensional holomorphic foliation on $\mathbb{P}^3$ whose restriction to the affine chart $\C^3$
is described by $X_k^t$. The singular set of $X_k^t$ has at least $4k+2$ isolated singularities, counted with multiplicity.
For every $M>0$, there is $k_0\in\mathbb N$ such that $z_k^t\in\sing(X_k^t)$ and $k>k_0$ imply $|z_k^t|>M$. Consequently,
\[
\lim_{k\to\infty}\mu(\fol_k^t, p)=\infty,
\qquad
p=[0:0:1:0].
\]
The same construction applies in higher dimensions. Let $X=\sum_{i=1}^nP_i(z)\frac{\partial}{\partial z_i}$
be a vector field on $\C^n$ with $z=(z_1,\ldots,z_n)$, where
\[
P_i(z) = \sum_{j=1}^{d}z_jf_{ij}(z_{d+1},\ldots,z_n),\qquad i=1,\ldots,n,
\]
and each $f_{ij}=f_{ij}(z_{d+1},\ldots,z_n)$ is analytic. Thus,
$\W=\{z\in\C^n\mid z_1=\cdots=z_d=0\}\subset \sing(X).$
Assume that the matrix
\[
\mathbf{A}=\bigg[\frac{\partial P_i}{\partial z_j}\bigg]_{1\leq i,j\leq d}= \bigg[f_{ij}\bigg]
\]
is nonsingular for all $(z_{d+1},\ldots, z_n)\in\C^{n-d}$. Then there exists $\epsilon>0$ (small enough) such that the deformation
\[
X_t=X-t\sum_{i=d+1}^{n}a_{i}\frac{\partial}{\partial z_i},
\qquad a_i\ne0,
\]
has no isolated singular points converging to $\W$ for $0<|t|<\epsilon$.
\end{example}

\subsection*{Acknowledgements}
M. Corr\^ea is partially supported by the Universit\`a degli Studi di Bari and by PRIN 2022MWPMAB, ``Interactions between Geometric Structures and Function Theories''. He is a member of INdAM--GNSAGA.

\end{document}